\theoremstyle{definition}
\newtheorem{theorem}{Theorem}[section]
\newtheorem{lemma}[theorem]{Lemma}
\newtheorem{proposition}[theorem]{Proposition}
\newtheorem{definition}[theorem]{Definition}
\newtheorem*{acknowledgements}{Acknowledgements}
\numberwithin{equation}{section}
\numberwithin{theorem}{section}
\definecolor{darkgreen}{rgb}{0,0.5,0}
\definecolor{darkblue}{rgb}{0,0.1,0.5}
\newcommand{\Hom}{\mathrm{Hom}}
\newcommand{\Ker}{\mathrm{Ker}}
\newcommand{\UH}[1]{U^{H}_q(#1)}
\newcommand{\UHbar}[1]{\overline{U}_q^{H}(#1)}
\newcommand{\mfb}{\mathfrak{b}}
\newcommand{\mfg}{\mathfrak{g}}
\newcommand{\mfh}{\mathfrak{h}}
\newcommand{\mcC}{\mathcal{C}}
\newcommand{\mcM}{\mathcal{M}}
\newcommand{\mcO}{\mathcal{O}}
\newcommand{\mbbC}{\mathbb{C}}
\newcommand{\mbbN}{\mathbb{N}}
\newcommand{\mbbZ}{\mathbb{Z}}
\newcommand{\w}{\mathsf{w}}
\newcommand{\Verma}[1]{M_{#1}^{\bar{0}}}
\newcommand{\DVerma}[1]{\check{M}_{#1}^{\bar{0}}}
\newcommand{\GVerma}[2]{M_{#1}^{#2}}
\newcommand{\DGVerma}[2]{\check{M}_{#1}^{#2}}
\newcommand{\irred}[1]{L_{#1}}
\newcommand\doi[2]{\href{http://dx.doi.org/#1}{#2}}
\newcommand{\Proj}[2]{P_{#1}^{#2}}
\title{BGG Reciprocity for Generalized Weight Modules of Unrolled Restricted Quantum Groups at Roots of Unity}
\author{Matthew Rupert}
\date{}
\begin{document}
\maketitle

\begin{abstract}We consider the category of generalized weight modules over the unrolled restricted quantum group $\UHbar{\mfg}$ of a finite-dimensional simple complex Lie algebra $\mfg$ at root of unity $q$. Although this category does not admit projective modules, it is filtered by subcategories with enough projectives. We show that the projective covers of irreducible modules in these subcategories satisfy a variant of BGG reciprocity and give a generators and relations description when $\mfg=\mathfrak{sl}_2$.\end{abstract}

\tableofcontents

\setlength{\parskip}{\baselineskip}%

\section{Introduction}
The unrolled quantum group $\UH{\mfg}$ associated to a finite-dimensional semisimple complex Lie algebra $\mfg$ is an extension of the non-restricted specialization $U_q(\mfg)$ of De Concini and Kac \cite{DCK}. It can be obtained by taking a smash product with the universal enveloping algebra of the Cartan subalgebra $\mfh$ of $\mfg$, $\UH{\mfg}:= U_q(\mfg) \rtimes U(\mfh)$. Inspired by previous work of Ohtsuki \cite{O}, a quotient of this algebra called the unrolled restricted quantum group $\overline{U}_q^H(\mfg)$ was introduced in \cite{GPT1} for $\mfg=\mathfrak{sl}_2$ and studied at even roots of unity as an example for constructing link invariants from its representation theory (see also \cite{GPT2}). Unrolled restricted quantum groups have since found substantial applications to quantum topology as their category of weight modules $\mathrm{Rep}_{wt}\overline{U}_q^H(\mathfrak{g})$ can be used to construct decorated topological quantum field theories \cite{BCGP,D,DGP}. A second application of these algebras, and the application of primary concern for this article, is the logarithmic Kazhdan-Lusztig correspondence for vertex operator algebras.

The study of equivalences between categories of modules over quantum groups and vertex operator algebras began with the pioneering work of Kazhdan and Lusztig \cite{KL1,KL2,KL3,KL4}. Proved therein was a braided equivalence between categories of modules over simple affine vertex operator algebras $L_k(\mfg)$ with $k+h^{\vee} \in \mathbb{Q}_{\geq 0}$ where $h^{\vee}$ is the dual Coxeter number and a corresponding quantum group. It is generally expected that many vertex operator algebras (in particular, those which admit free field realisations) should admit categories of modules which are equivalent to some category of modules over a corresponding quantum group. This expectation is broadly referred to as the Kazhdan-Lusztig correspondence, and logarithmic when the categories involved are non-semisimple. The study of logarithmic Kazhdan-Lusztig equivalences was first explored for the Triplet vertex operator algebra $\mathcal{W}(p)$ ($p \geq 2$) in the work of Feigin et al \cite{FGST1,FGST2} where a ribbon equivalence with a certain category of modules of the restricted quantum group $\overline{U}_q(\mathfrak{sl}_2)$ was conjectured. This conjecture was studied \cite{NT,KS} and refined \cite{CGR} to an equivalence between modules over $\mathcal{W}(p)$ and a quasi-Hopf modification $\tilde{U}_q(\mathfrak{sl}_2)$ of $U_q(\mathfrak{sl}_2)$. This refined conjecture was eventually proven for $p=2$ in \cite{CLR} and for all $p \geq 2$ in \cite{GN}.

The Triplet $\mathcal{W}(p)$ contains a sub-VOA called the Singlet $\mathcal{M}(p)$ which was expected to admit a category of modules which is braided equivalent to the category of weight modules $\mathrm{Rep}_{wt}\UHbar{\mfg}$ over the unrolled restricted quantum group at $2p$-th root of unity $q$. This conjecture was motivated in \cite{CMR,CGR} and connections to unrolled restricted quantum groups were established for simple current extensions of vertex algebras related to the Singlet in \cite{ACKR,CRR}. The conjecture was further refined to a particular category $\mcO^T_{\mathcal{M}(p)}$ of Singlet modules in \cite{CMY} and the braided equivalence $\mathrm{Rep}_{wt}\UHbar{\mfg} \cong \mcO^T_{\mathcal{M}(p)}$ has now been proved in \cite{CLR2}. Both of these categories consist of modules which decompose as a direct sum of weight spaces over some operators. We can consider instead the categories $\mcO_{\mcM(p)}^{T,Gen}, \mathrm{Rep}_{Gen}\UHbar{\mfg}$ of modules which decompose as direct sums of generalized weight spaces. It is expected that the braided equivalence of weight module categories proved in \cite{CLR2} should extend to a braided equivalence 
\[ \mathrm{Rep}_{Gen}\UHbar{\mathfrak{sl}_2} \cong \mcO_{\mcM(p)}^{T,Gen}\]
of categories of generalized weight modules. The focus of this article is to investigate the abelian structure of $\mathrm{Rep}_{Gen}\UHbar{\mfg}$. We show that this category is filtered by subcategories which have enough projectives and that projective covers satisfy a variant of BGG reciprocity. We further show that projective and tilting modules coincide and use this result to give a generators and relations description of projective covers of irreducible modules when $\mfg=\mathfrak{sl}_2$.

\begin{acknowledgements}
The author is supported by the Pacific Institute for the Mathematical Sciences (PIMS) postdoctoral fellowship program.
\end{acknowledgements}

\subsection{Results}
Let $\mfg$ be a finite-dimensional simple complex Lie algebra and $\UHbar{\mfg}$ the associated unrolled restricted quantum group (Definitions \ref{Def:unrolled} \& \ref{Def:unrolledrestricted}). Let $\mcC:=\mathrm{Rep}_{Gen}\UHbar{\mfg}$ denote the category of finite-dimensional generalized $\UHbar{\mfg}$-weight modules and $\mcC_{\bar{m}}$ the subcategory consisting of modules whose generalized weights have maximal degree $\bar{m} \in \mathbb{N}^n$ where $n=\mathrm{rank}(\mfg)$ (see Section \ref{Sec:genweight} for details). Each subcategory $\mcC_{\bar{m}}$ contains all irreducible modules in $\mcC$ and these irreducible modules admit projective covers in $\mcC_{\bar{m}}$ (Theorem \ref{Thm:projexist}):
\begin{theorem}
$\mcC_{\bar{m}}$ has enough projectives.
\end{theorem}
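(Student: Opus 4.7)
The plan is to construct projective objects in $\mcC_{\bar{m}}$ by inducing from the Cartan subalgebra $\UHbar{\mfh}$ with a polynomial-thickened one-dimensional weight module, and then invoking Frobenius reciprocity to transport projectivity.

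For each weight $\la$, define the $\UHbar{\mfh}$-module
$$N_\la^{\bar{m}} := \mathbb{C}[t_1,\ldots,t_n]/(t_1^{m_1},\ldots,t_n^{m_n})$$
on which $K_i$ acts by the scalar $q^{\la_i}$ and $H_i$ by $\la_i + t_i$. Within the subcategory of finite-dimensional $\UHbar{\mfh}$-modules whose generalized weight spaces have degree at most $\bar{m}$, the block of generalized weight $\la$ is equivalent to the module category of the local Artinian ring $\mathbb{C}[t_1,\ldots,t_n]/(t_1^{m_1},\ldots,t_n^{m_n})$, and $N_\la^{\bar{m}}$ corresponds under this equivalence to the rank-one free module, hence is projective. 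Equivalently, $\Hom_{\UHbar{\mfh}}(N_\la^{\bar{m}},-)$ identifies with the generalized-$\la$-weight functor $M \mapsto M[\la]$, which is exact on $\mathrm{Res}_{\UHbar{\mfh}}\,\mcC_{\bar{m}}$.

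Next, set $P_\la^{\bar{m}} := \UHbar{\mfg} \otimes_{\UHbar{\mfh}} N_\la^{\bar{m}}$. By the triangular PBW decomposition $\UHbar{\mfg} \cong \UHbar{\mfn^-} \cdot \UHbar{\mfh} \cdot \UHbar{\mfn^+}$ together with the restricted nilpotency relations that make $\UHbar{\mfn^\pm}$ finite-dimensional, $P_\la^{\bar{m}}$ is finite-dimensional with $\mathbb{C}$-basis $\{f_I e_J \otimes t^{\bar{k}}\}$ as $f_I,e_J$ range over PBW bases of $\UHbar{\mfn^\pm}$ and $0 \leq k_i < m_i$. Using $[H_j,F_i] = -a_{ji} F_i$ and $[H_j,E_i] = a_{ji} E_i$, one verifies that $H_j$ acts on the span of $\{f_I e_J\} \otimes N_\la^{\bar{m}}$ as the scalar $\la_j + \kappa_j$ (with $\kappa_j$ the $H_j$-weight of $f_I e_J$) plus the nilpotent $1 \otimes t_j$ of order $m_j$; hence $P_\la^{\bar{m}} \in \mcC_{\bar{m}}$. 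Frobenius reciprocity then gives
$$\Hom_{\UHbar{\mfg}}(P_\la^{\bar{m}},M) \;\cong\; \Hom_{\UHbar{\mfh}}(N_\la^{\bar{m}},\mathrm{Res}\,M) \;\cong\; M[\la]$$
for $M \in \mcC_{\bar{m}}$, an exact functor by the previous paragraph, so $P_\la^{\bar{m}}$ is projective in $\mcC_{\bar{m}}$. For any simple $L \in \mcC_{\bar{m}}$, choose $\la$ in its support: then $L[\la] \neq 0$, so the corresponding morphism $P_\la^{\bar{m}} \to L$ is nonzero and hence surjective by simplicity, establishing enough projectives.

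The principal obstacle is verifying that $P_\la^{\bar{m}}$ lies in $\mcC_{\bar{m}}$ rather than some larger $\mcC_{\bar{m}'}$: the generalized weight degree must be preserved by the induction. This works precisely because induction from $\UHbar{\mfh}$ transports the nilpotent parameters $t_j$ unchanged, while elements of $\UHbar{\mfn^\pm}$ contribute only to the semisimple part of the $H_j$-action via the constants $\pm a_{ji}$. A naive alternative — tensoring a weight-category projective with a Jordan-block module — would double the nilpotence through $\Delta(H_j) = H_j \otimes 1 + 1 \otimes H_j$ and produce an object of $\mcC_{2\bar{m}}$, so induction from the Cartan is the only natural operation that hits the correct degree bound.
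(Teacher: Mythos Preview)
Your approach is correct in substance and takes a genuinely different route from the paper. The paper never induces from the Cartan: it first proves (Lemma~\ref{Lem:Vermaproj}) that the generalized Verma module $\GVerma{\lambda}{\bar{m}}$, induced from the \emph{Borel}, is projective in $\mcC_{\bar{m}}$ whenever $\lambda$ is typical, and then tensors such a Verma with a suitable simple weight module $\irred{-\mu}^*\in\mcC_{\bar 0}$ to manufacture a projective surjecting onto an arbitrary $\irred{\lambda}$ (Lemmas~\ref{Lem:tensorstandard} and~\ref{Lem:tensorproj}). Your argument is conceptually cleaner in that it avoids the typical/atypical dichotomy and the tensor-product machinery entirely; on the other hand, the paper's route simultaneously exhibits every projective as a summand of something with a degree-$\bar m$ standard filtration, which is precisely the input to BGG reciprocity later. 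Your $P_\lambda^{\bar m}$ also admits such a filtration (filter $\UHbar{\mfg}^+\cdot N_\lambda^{\bar m}$ by weight and then induce from the Borel), but you would have to supply that argument separately.

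Two small points need fixing. First, letting $K_i$ act on $N_\lambda^{\bar m}$ by the \emph{scalar} $q^{\lambda_i}$ is incompatible with the defining condition $K_j=q_j^{H_j}$ of $\mcC$: since $H_j$ acts as $\lambda_j+t_j$ with $t_j$ nilpotent and nonzero, $K_j$ must act as $q_j^{\lambda_j}\exp\bigl((2d_j\pi\mathsf{i}/\ell)\,t_j\bigr)$, exactly as in the paper's $\mathbb{C}_\lambda^{\bar m}$ (equation~\eqref{eq:Clambda0}); without this the induced module does not lie in $\mcC$ at all. Second, there is an off-by-one: in the paper's conventions $\mcC_{\bar m}$ means $(H_j-\lambda_j)^{m_j+1}=0$, so you want $N_\lambda^{\bar m}=\mathbb{C}[t_1,\dots,t_n]/(t_1^{m_1+1},\dots,t_n^{m_n+1})$ for the identification $\Hom_{\UHbar{\mfg}^0}(N_\lambda^{\bar m},\mathrm{Res}\,M)\cong M[\lambda]$ to hold for all $M\in\mcC_{\bar m}$. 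With these adjustments the Frobenius-reciprocity argument goes through.
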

Although $\mcC$ does not itself admit projective modules (Theorem \ref{Thm:projexist}), every module $M \in \mcC$ satisfies $M \in \mcC_{\bar{m}}$ for some $\bar{m} \in \mathbb{N}^n$. The abelian structure of $\mcC$ can therefore be understood through the structure of projective modules in $\mcC_{\bar{m}}$ for each choice of $\bar{m} \in \mathbb{N}^n$. For any weight $\lambda \in \mfh^*$ where $\mfh$ denotes the Cartan subalgebra of $\mfg$, we introduce the generalized Verma module $\GVerma{\lambda}{\bar{m}}$ of weight $\lambda$ and degree $\bar{m}$ in Definition \ref{Def:genverma}. The irreducible modules in $\mcC$ are the irreducible quotients $\irred{\lambda}$ of the generalized Verma modules (of any degree) with the same highest weight. We show (Lemma \ref{Prop:projfilt}) that every projective module in $\mcC_{\bar{m}}$ admits a degree $\bar{m}$ standard filtration (Definition \ref{Def:standard}). That is, a filtration whose successive quotients consist of generalized Verma modules $\GVerma{\lambda}{\bar{m}}$ with degree $\bar{m}$. If we denote by $(P,\GVerma{\lambda}{\bar{m}})$ the multiplicity of $\GVerma{\lambda}{\bar{m}}$ in the degree $\bar{m}$ standard filtration of a projective module $P \in \mcC_{\bar{m}}$, then we prove the following variant of BGG reciprocity (Theorem \ref{Thm:BGG}):
\begin{theorem}
The projective cover $\Proj{\lambda}{\bar{m}}$ of $\irred{\lambda}$ in $\mcC_{\bar{m}}$ satisfies 
\[ (\Proj{\lambda}{\bar{m}},\GVerma{\mu}{\bar{m}})=[\GVerma{\mu}{\bar{0}},\irred{\lambda}]. \]
\end{theorem}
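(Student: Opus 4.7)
The plan is to mimic the classical BGG reciprocity argument, probing $\Proj{\lambda}{\bar{m}}$ against the contragredient duals $\DVerma{\mu} = \check{M}_{\mu}^{\bar{0}}$ of the ordinary ($\bar{0}$-degree) Verma modules. The argument hinges on the two identities
\[ \dim \Hom_{\mcC_{\bar{m}}}(\GVerma{\nu}{\bar{m}}, \DVerma{\mu}) = \delta_{\nu,\mu}, \qquad \Ext^1_{\mcC_{\bar{m}}}(\GVerma{\nu}{\bar{m}}, \DVerma{\mu}) = 0, \]
which together allow one to apply $\Hom(-, \DVerma{\mu})$ termwise to a standard filtration and read off a single multiplicity.

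The $\Hom$ identity is immediate from the universal property of the generalized Verma module: a morphism $\GVerma{\nu}{\bar{m}} \to X$ corresponds to a vector $w \in X$ with $E_i w = 0$ and $(K_i - q^{\nu_i})^{m_i+1} w = 0$ for each $i$; since $\DVerma{\mu}$ is an ordinary weight module whose highest-weight space is one-dimensional in weight $\mu$ (with no weights strictly above $\mu$), such a $w$ exists only when $\nu = \mu$ and spans a one-dimensional space.

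For the $\Ext^1$ vanishing, given an extension $0 \to \DVerma{\mu} \to E \to \GVerma{\nu}{\bar{m}} \to 0$ in $\mcC_{\bar{m}}$, one lifts the canonical generator $v$ to a vector $\tilde{v}_0$ in the $\nu$-generalized weight space of $E$; the membership $E \in \mcC_{\bar{m}}$ guarantees the nilpotency bound $(K_i - q^{\nu_i})^{m_i+1}\tilde{v}_0 = 0$, and one then adjusts $\tilde{v}_0$ by an element of $(\DVerma{\mu})_\nu$ to arrange $E_i \tilde{v}_0 = 0$ for all $i$, producing via the universal property a splitting of the extension. This is the quantum, generalized-weight analogue of the classical fact $\Ext^1_\mcO(M_\nu, \check{M}_\mu) = 0$, reflecting that $\DVerma{\mu}$ serves as a costandard (equivalently, injective in the truncation to weights $\leq \mu$) object, and should follow here by the same truncation-and-duality reasoning.

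With these facts in hand, applying $\Hom(-, \DVerma{\nu})$ to the $\bar{m}$-standard filtration of $\Proj{\lambda}{\bar{m}}$ provided by Lemma \ref{Prop:projfilt} yields
\[ \dim \Hom(\Proj{\lambda}{\bar{m}}, \DVerma{\nu}) = (\Proj{\lambda}{\bar{m}}, \GVerma{\nu}{\bar{m}}). \]
On the other hand, a standard induction on composition length, using that $\Proj{\lambda}{\bar{m}}$ is the projective cover of $\irred{\lambda}$ and satisfies $\Ext^1(\Proj{\lambda}{\bar{m}}, -) = 0$, gives $\dim \Hom(\Proj{\lambda}{\bar{m}}, X) = [X, \irred{\lambda}]$ for every $X \in \mcC_{\bar{m}}$. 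Specializing to $X = \DVerma{\nu}$ and using that contragredient duality preserves characters (hence composition multiplicities) produces $[\DVerma{\nu}, \irred{\lambda}] = [\GVerma{\nu}{\bar{0}}, \irred{\lambda}]$, and comparing with the previous display completes the proof. The main technical obstacle is the $\Ext^1$-vanishing, where the nilpotency condition defining $\mcC_{\bar{m}}$ must be reconciled with the ordinary-weight structure of $\DVerma{\mu}$; this requires verifying that the contragredient duality on $\mcC_{\bar{m}}$ behaves as in classical category $\mcO$ and that $\DVerma{\mu}$ retains its costandard character in the unrolled restricted setting.
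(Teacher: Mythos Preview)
Your overall strategy is exactly the paper's: probe $\Proj{\lambda}{\bar{m}}$ against the degree-$\bar{0}$ dual Vermas $\DVerma{\mu}$, establish $\dim\Hom(\GVerma{\nu}{\bar{m}},\DVerma{\mu})=\delta_{\nu,\mu}$ and $\Ext^1(\GVerma{\nu}{\bar{m}},\DVerma{\mu})=0$, run these through the standard filtration of $\Proj{\lambda}{\bar{m}}$, and finish with $\dim\Hom(\Proj{\lambda}{\bar{m}},X)=[X:\irred{\lambda}]$ together with the character-preserving duality.

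The one place your sketch does not go through as written is the $\Ext^1$-vanishing. The step ``adjust $\tilde v_0$ by an element of $(\DVerma{\mu})_\nu$ to arrange $E_i\tilde v_0=0$'' fails when $\nu<\mu$: there is no reason the maps $E_i:(\DVerma{\mu})_\nu\to(\DVerma{\mu})_{\nu+\alpha_i}$ are surjective, so you cannot in general cancel $E_i\tilde v_0$ this way. The paper handles this cleanly by observing that $\GVerma{\nu}{\bar{m}}$ itself admits a degree-$\bar 0$ standard filtration (by copies of $\Verma{\nu}$), and then invoking a general lemma (Lemma~\ref{Lem:Ext}): if $M$ and $N$ both carry standard filtrations of the same degree then $\Ext^n(M,\check N)=0$ for all $n>0$. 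This reduces the question to the ordinary $\Ext^1(\Verma{\nu},\DVerma{\mu})=0$, which is precisely the ``truncation-and-duality'' fallback you mention at the end; that fallback is what actually carries the argument, not the direct adjustment. A minor notational point: in this setting generalized weight spaces and the universal property of $\GVerma{\nu}{\bar m}$ are formulated via nilpotency of $H_j-\lambda(H_j)$, not of $K_j-q_j^{\lambda(H_j)}$; the two agree on objects of $\mcC$ because $K_j=q_j^{H_j}$ there, but the statements should be phrased in terms of $H_j$.
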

Here, $[\GVerma{\mu}{\bar{0}},\irred{\lambda}]$ denotes the multiplicity of $\irred{\lambda}$ in the Jordan-Holder filtration of the (weight) Verma module $\GVerma{\mu}{\bar{0}}$. We call a module $M \in \mcC_{\bar{m}}$ tilting if it admits both standard and costandard filtrations of degree $\bar{m}$ where a costandard filtration is a filtration whose successive quotients are the images of generalized Verma modules under the duality functor introduced in Definition \ref{Def:dual}. We have the following characterization of projective modules (Proposition \ref{Prop:tiltingproj}):
\begin{proposition}\label{Prop:introtilting}
A module $M \in \mcC_{\bar{m}}$ is projective iff it is tilting.
\end{proposition}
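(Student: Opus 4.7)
The plan is to prove the two implications separately, using the duality functor $D$ of Definition \ref{Def:dual} together with the enough-projectives result (Theorem \ref{Thm:projexist}), the standard filtration property of projectives (Lemma \ref{Prop:projfilt}), and BGG reciprocity (Theorem \ref{Thm:BGG}). The crucial structural input is a Frobenius-type identification of injective and projective objects inside $\mcC_{\bar{m}}$, which reflects the unimodular character of $\UHbar{\mfg}$.

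For the forward implication, let $P\in\mcC_{\bar{m}}$ be projective. Lemma \ref{Prop:projfilt} already provides the standard filtration. To produce a costandard filtration I would apply the duality $D$: since $D$ is an exact contravariant self-equivalence of $\mcC_{\bar{m}}$ it sends projectives to injectives, so $D(P)$ is injective. The key step is showing that every injective in $\mcC_{\bar{m}}$ is projective. I would do this by identifying the dual of each indecomposable projective $\Proj{\lambda}{\bar{m}}$ with an indecomposable projective $\Proj{\lambda^*}{\bar{m}}$ for a suitable shift $\lambda\mapsto\lambda^*$ of the highest weight. Concretely, $D(\Proj{\lambda}{\bar{m}})$ has simple socle $\irred{\lambda}$ and a costandard filtration coming from dualising the standard filtration of $\Proj{\lambda}{\bar{m}}$, and BGG reciprocity (Theorem \ref{Thm:BGG}) pins down the composition multiplicities so as to match those of $\Proj{\lambda^*}{\bar{m}}$, forcing the identification. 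Granting this, $D(P)$ is projective and hence admits a standard filtration by Lemma \ref{Prop:projfilt}; applying $D$ once more converts this to a costandard filtration on $P\cong D(D(P))$, so $P$ is tilting.

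For the reverse implication, let $M\in\mcC_{\bar{m}}$ be tilting. I would run the standard Ext-vanishing argument for highest weight categories. First, establish $\Ext^i_{\mcC_{\bar{m}}}(\GVerma{\lambda}{\bar{m}},\DGVerma{\mu}{\bar{m}})=0$ for $i>0$ by comparing the head of $\GVerma{\lambda}{\bar{m}}$ with the socle of $\DGVerma{\mu}{\bar{m}}$ and using standard short exact sequence manipulations. Inducting on filtration length then gives $\Ext^i_{\mcC_{\bar{m}}}(M',N)=0$ for every $M'$ with a degree $\bar{m}$ standard filtration and every $N$ with a degree $\bar{m}$ costandard filtration. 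Taking $M'=M$ yields vanishing against every costandard-filtered $N$. Dually to the forward direction, the Frobenius identification implies that every injective in $\mcC_{\bar{m}}$ admits a costandard filtration, so any module has an injective resolution by costandard-filtered objects. A dimension shift along such a resolution then yields $\Ext^i_{\mcC_{\bar{m}}}(M,-)=0$ for $i>0$, so $M$ is projective.

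The main obstacle is the injective-equals-projective claim inside $\mcC_{\bar{m}}$. In the generalised weight setting this requires verifying that the highest-weight shift induced by $D$ on the indecomposable projective covers $\Proj{\lambda}{\bar{m}}$ preserves the indexing set, and that the degree bookkeeping $\bar{m}$ is compatible with the duality; BGG reciprocity is precisely what allows one to match the multiplicities on the two sides and conclude the identification.
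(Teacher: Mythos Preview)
Your proposal correctly locates the heart of the matter in the self-duality of projectives (equivalently, that injectives and projectives coincide in $\mcC_{\bar{m}}$), but there are two genuine gaps.

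First, your argument for the Frobenius identification does not go through. You suggest matching $D(\Proj{\lambda}{\bar{m}})$ with some $\Proj{\lambda^*}{\bar{m}}$ by comparing filtration multiplicities via BGG reciprocity, but equal multiplicities do not force an isomorphism of indecomposables: two modules can have identical Jordan--H\"older series and still differ. One needs a structural argument producing an actual map and showing it is an isomorphism. The paper bypasses this by invoking the argument of \cite[Theorem~4.15]{R} verbatim to obtain $\check{\Proj{\lambda}{\bar{m}}}\cong\Proj{\lambda}{\bar{m}}$ directly; dualising the standard filtration then immediately yields the costandard one.

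Second, even granting that injectives are costandard-filtered, your dimension-shift argument for the reverse implication fails. From an injective resolution $0\to X\to I^0\to I^1\to\cdots$ one obtains $\Ext^i(M,X)\cong\Ext^1(M,\Omega^{-(i-1)}X)$, but the cosyzygies $\Omega^{-j}X$ are quotients of costandard-filtered modules by arbitrary submodules and are not themselves costandard-filtered in general, so Lemma~\ref{Lem:Ext} does not apply to them and the shift never terminates (indeed, once the category is Frobenius, non-projectives have infinite injective dimension). The paper instead adapts the splitting argument of \cite[Theorem~5.1]{BT}: for tilting $M$, take the top costandard quotient $q\colon M\twoheadrightarrow\DGVerma{\lambda_k}{\bar{m}}$; the forward direction already gives $\Proj{\lambda}{\bar{m}}$ a costandard filtration with the same top, hence a surjection $p\colon\Proj{\lambda}{\bar{m}}\twoheadrightarrow\DGVerma{\lambda_k}{\bar{m}}$ with costandard-filtered kernel. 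Projectivity yields $h\colon\Proj{\lambda}{\bar{m}}\to M$ with $q\circ h=p$, and $\Ext^1(M,\Ker p)=0$ makes $p_*$ surjective on $\Hom$, giving $g\colon M\to\Proj{\lambda}{\bar{m}}$ with $p\circ g=q$. Then $p\circ(g\circ h)=p$ with $p$ essential forces $g\circ h$ to be an isomorphism, so $\Proj{\lambda}{\bar{m}}$ splits off $M$; one finishes by induction on filtration length.
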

In Section \ref{Sec:projcoversl2} we restrict our attention to the case $\mfg=\mathfrak{sl}_2$ and apply Proposition \ref{Prop:introtilting} to determine a generators and relations description of projective covers. Let $\ell=\mathrm{ord}(q)$ and let $r=\ell$ if $\ell$ is odd and $r=\ell/2$ if $\ell$ is even. The irreducible modules of $\UHbar{\mathfrak{sl}_2}$ fall into two families \cite[Sections 5 \& 10]{CGP}:
\begin{itemize}
\item $L_i \otimes \mathbb{C}_{\frac{k \ell}{2}}$ for $i=0,...,r-2,k \in \mathbb{Z}$.
\item $M_{\alpha}$ for $\alpha+1 \in \ddot{\mathbb{C}}$.
\end{itemize}
where $\ddot{\mathbb{C}}:=\begin{cases}
(\mbbC \setminus \mathbb{Z}) \cup r\mathbb{Z} & \text{ if $\ell$ is even}\\
(\mbbC \setminus \frac{1}{2}\mathbb{Z}) \cup \frac{r}{2}\mathbb{Z} & \text{ if $\ell$ is odd}.
\end{cases}
$\\

$\mathbb{C}_{\frac{k \ell}{2}}^H$ is a one-dimensional module with weight $\frac{k\ell}{2}$, $L_i$ is an $(i+1)$-dimensional irreducible module and $M_{\alpha}$ is an irreducible Verma module of highest weight $\alpha$. The projective cover of $M_{\alpha}$ in $\mcC_{m}$ (recall now that $n=\mathrm{rank}(\mathfrak{sl}_2)=1$) is the generalized Verma module $\GVerma{\alpha}{m}$ of degree $m$ with the same highest weight. Let $i \in \{0,1,...,r-2\},m \in \mathbb{N}$ and let $\Proj{i}{m}$ denote the $\UHbar{\mathfrak{sl}_2}$-module with basis given by the $2(m+1)r$ vectors
\[\w_{i-2k,s}^T,\quad \w_{i-2k,s}^S, \quad \w_{j-r-2k',s}^L, \quad \w_{r-j+2k',s}^R\] with $k \in \{0,...,i\}$, $k' \in \{0,...,j\}$, and $s\in \{0,...,m\}$ satisfying Equations \eqref{eq:proj1} - \eqref{eq:proj14}. Note that in Equations \eqref{eq:proj1} - \eqref{eq:proj14} we have adopted the presentation for $\UHbar{\mathfrak{sl}_2}$ given in Equations \eqref{eq:sl21}-\eqref{eq:sl22}. The module $\Proj{i}{0}$ was shown to be projective in $\mcC_{0}$ in \cite[Proposition 6.2]{CGP} by showing that any surjection onto $\Proj{i}{0}$ splits. This argument relies on the fact that weight spaces are degree zero. We show that $\Proj{i}{m}$ is projective in $\mcC_m$ by showing that it is tilting and obtain the following result (Theorem \ref{Thm:projcover}):
\begin{theorem}
$\Proj{i}{m} \otimes \mathbb{C}_{\frac{k \ell}{2}}$ is the projective cover of $L_i \otimes \mathbb{C}_{\frac{k \ell}{2}}$ in $\mcC_m$.
\end{theorem}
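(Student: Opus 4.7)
The plan is to apply Proposition~\ref{Prop:introtilting}, reducing the projectivity of $\Proj{i}{m}$ in $\mcC_m$ to exhibiting on it both a standard and a costandard filtration of degree $\bar{m}=m$. Once projectivity is shown, identifying $L_i$ as the simple head together with indecomposability will identify $\Proj{i}{m}$ as the projective cover of $L_i$. The general $k$ case reduces to $k=0$ because tensoring by the one-dimensional module $\mathbb{C}_{\frac{k\ell}{2}}$ is an exact autoequivalence of $\mcC_m$ that merely shifts weights by $\frac{k\ell}{2}$ and hence sends projective covers to projective covers.

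First I would check that \eqref{eq:proj1}--\eqref{eq:proj14} define a genuine $\UHbar{\mathfrak{sl}_2}$-module structure on the $2r(m+1)$-dimensional space with the indicated basis, by verifying the defining relations \eqref{eq:sl21}--\eqref{eq:sl22}. Reading off the action of $H$ from those formulas, each basis vector lies in a generalized weight space of $H$-degree exactly $s$; in particular, the maximal generalized weight degree equals $m$, so $\Proj{i}{m}\in\mcC_m$.

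The main step is the explicit construction of the two filtrations. Setting $j=r-2-i$, my proposal is: filter the subspace spanned by the $\w^L$ and $\w^R$ vectors by first cutting off the $s$-index from below and then breaking the result along the $k'$-index; using the relations, the successive quotients are isomorphic to generalized Verma modules $\GVerma{\mu}{m}$ of degree $m$, with the highest weight read off from the top $\w^R_{r-j,s}$-generator. Passing to the quotient of $\Proj{i}{m}$ by this submodule, the images of the $\w^T$ and $\w^S$ vectors admit a further filtration whose standard subquotients are $\GVerma{i}{m}$ and $\GVerma{\mu'}{m}$ for the appropriate $\mu'$. The costandard filtration is then produced by applying the duality functor from Definition~\ref{Def:dual}: the symmetry of the basis labelling (with $T\leftrightarrow S$ and $L\leftrightarrow R$ swapping head and socle) strongly suggests that $\Proj{i}{m}$ is self-dual, in which case dualizing the standard filtration supplies a costandard one directly.

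With both filtrations in hand, Proposition~\ref{Prop:introtilting} gives projectivity of $\Proj{i}{m}$ in $\mcC_m$. To finish, one verifies that $\w^T_{i,0}$ is a cyclic generator, and that the unique maximal submodule is the linear complement of the span of $\{\w^T_{i-2k,0}\mid 0\le k\le i\}$; this exhibits $L_i$ as the head of $\Proj{i}{m}$, forces indecomposability, and establishes $\Proj{i}{m}$ as the projective cover of $L_i$ in $\mcC_m$. The main obstacle will be the bookkeeping in the filtration step: one must match each successive subquotient in the $(k,s)$ and $(k',s)$ indexing with the correct generalized Verma module, tracking weights, degrees, and the scalars and signs that appear in \eqref{eq:proj1}--\eqref{eq:proj14}. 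Once that matching is nailed down, all remaining checks are purely formal consequences of Proposition~\ref{Prop:introtilting} and the autoequivalence $(-)\otimes\mathbb{C}_{\frac{k\ell}{2}}$.
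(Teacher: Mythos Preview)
Your overall strategy---use Proposition~\ref{Prop:tiltingproj} to reduce projectivity to exhibiting a degree-$m$ standard and costandard filtration, then identify the simple head and twist by $\mathbb{C}_{\frac{k\ell}{2}}$---is exactly the paper's approach. The problem is in your proposed construction of the standard filtration.

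First, the span of the $\w^L$ and $\w^R$ vectors is \emph{not} a submodule: equation~\eqref{eq:proj9} shows that $E\w^L_{j-r,s}$ has nonzero components on $\w^S_{-i,s}$ and on $\w^T$ vectors. The correct submodule for the standard step is the one generated by the highest-weight vector $\w^R_{j+r,m}$, which by \eqref{eq:proj10} is genuinely highest weight; its span is the $\w^S$ and $\w^R$ vectors, and it is isomorphic to $\GVerma{j+r}{m}$ by a dimension count. The quotient is then generated by the image of $\w^T_{i,m}$ and is isomorphic to $\GVerma{i}{m}$.

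Second, and relatedly, your plan to refine by cutting the $s$-index and then the $k'$-index cannot produce a degree-$m$ standard filtration. Each $\GVerma{\mu}{m}$ has dimension $(m+1)r$, while $\dim\Proj{i}{m}=2(m+1)r$, so any degree-$m$ standard filtration has length exactly two; there is no room for the four or more subquotients your description implies. Slicing along $s$ lowers the generalized-weight degree of the pieces, so the subquotients would not be degree-$m$ Vermas at all.

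For the costandard side, self-duality is plausible but you have not proved it, and you cannot invoke ``projective $\Rightarrow$ self-dual'' here since projectivity is what you are trying to establish. The paper instead argues directly: it takes the submodule $\langle \w^L_{-j-r,m}\rangle$ (spanned by the $\w^S$ and $\w^L$ vectors), computes the $\check{(\,\cdot\,)}$-dual explicitly to see it is highest weight of weight $i$ and degree $m$, hence $\cong\GVerma{i}{m}$, so the submodule itself is $\check{M}_i^m$; the quotient is similarly $\check{M}_{j+r}^m$. That gives the length-two costandard filtration without appealing to self-duality.
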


\section{Preliminaries}\label{Sec:prelim}

Let $\mfg$ be a simple finite-dimensional complex Lie algebra with Cartan matrix $A=(a_{ij})_{1 \leq i,j, \leq n}$ and Cartan subalgebra $\mfh$. Denote the rank of $\mfg$ by $n$ and let $\Delta=\{\alpha_1,...,\alpha_n\} \subset \mfh^*$ be the set of simple roots of $\mfg$, $\Delta^+$ ($\Delta^-$) the set of positive (negative) roots, and $L_R=\oplus_{k=1}^n \mbbZ \alpha_k$ the integer root lattice. Let $\{H_1,...,H_n\}$ denote the basis of $\mfh$ such that $\alpha_j(H_i)=a_{ij}$ and let $\langle -,- \rangle$ be the form defined by $\langle \alpha_i,\alpha_j\rangle = d_ia_{ij}$ where $d_i=\langle \alpha_i,\alpha_i \rangle/2$ normalized such that short roots have length $2$. Finally, let $L_W=\oplus_{k=1}^n \mbbZ w_k$ be the weight lattice generated by the fundamental weights i.e. the dual basis $\{\omega_1,...,\omega_n\} \subset \mfh^*$ of $\{d_1H_1,...,d_nH_n\} \subset \mfh$. 
\begin{definition}\label{Def:unrolled}
The unrolled quantum group associated to $L$ and $q \in \mbbC^{\times}$ such that $\mathrm{ord}(q^2)>d_i$ for all $i \in \{1,...,n\}$ is the $\mbbC$-algebra with generators $K_{\gamma},X_{\pm i}, H_i$ such that $\gamma \in L$ and $i\in \{1,.,.,n\}$ with relations
\begin{align}
\label{eqKX}K_0=1, \quad K_{\gamma_1}K_{\gamma_2}=K_{\gamma_1+\gamma_2},\quad K_{\gamma}X_{\pm j}K_{-\gamma}=q^{\pm \langle \gamma,\alpha_j \rangle}X_{\pm j}, \\
\label{eqH} [H_i,H_j]=0, \quad [H_i,K_{\gamma}]=0, \qquad \quad [H_i,X_{\pm j}]=\pm a_{ij} X_{\pm j }, \quad \\
\label{eqX} \quad  [X_i,X_{-j}]=\delta_{i,j}\frac{K_{\alpha_j}-K_{\alpha_j}^{-1}}{q_j-q_j^{-1}},\qquad \qquad \qquad \\
\label{serre} \sum\limits_{k=0}^{1-a_{ij}} (-1)^k \binom{1-a_{ij}}{k}_{q_i} X^k_{\pm i}X_{\pm j} X_{\pm i}^{1-a_{ij}-k}=0 \qquad \text{if $ i \not = j$}.
\end{align}
There is a Hopf-algebra structure on $U^H_{q}(\mfg)$ with coproduct $\Delta$, counit $\epsilon$, and antipode $S$ defined by
\begin{align}
\label{coK}\Delta(K_{\gamma})&=K_{\gamma} \otimes K_{\gamma}, & \epsilon(K_{\gamma})&=1,&  S(K_{\gamma})&=K_{-\gamma}\\
\label{coXi}\Delta(X_i)&=1 \otimes X_i + X_i \otimes K_{\alpha_i} & \epsilon(X_i)&=0, & S(X_i)&=-X_iK_{-\alpha_i},\\
\label{coX-i}\Delta(X_{-i})&=K_{-\alpha_i} \otimes X_{-i}+X_{-i} \otimes 1, & \epsilon(X_{-i})&=0, & S(X_{-i})&=-K_{\alpha_i}X_{-i}.,\\
\label{coH} \Delta(H_i)&=1\otimes H_i + H_i \otimes 1,& \quad \epsilon(H_i)&=0, &\quad S(H_i)&=-H_i.
\end{align}
\end{definition}

The subalgebra generated by $X_{\pm k},K_{\gamma}$ is the De Concini - Kac quantum group $U_q(\mfg)$. It can be shown by induction on $s$ that \cite[Equation 1.3.3]{DCK}
\[[X_i,X_{-j}^s]=\delta_{ij}[s]_iX_{-i}^{s-1}[K_i;d_i(1-s)]\]
where $[K_i;n]=(K_iq^n-K_i^{-q}q^{-n})/(q_i-q_i^{-1})$. Let $d_{\alpha}=\frac{1}{2}\langle \alpha,\alpha \rangle$ and $r_{\alpha}:=r/\mathrm{gcd}(d_{\alpha},r)$, then $[r_{\alpha_i}]_i=0$ so $[X_i,X_{-j}^{r_{\alpha_i}}]=0$ for all $i,j$.  Let $\beta_1,...,\beta_N$ be a total ordering on the set of positive roots $\Delta^+$ of $\mfg$ and $X_{\pm \beta_k} \in U_q(\mfg)$ the associated root vectors constructed through the action of the braid group as in \cite[Subsection 8.1 and 9.1]{CP}. Applying the braid group action then gives $[X_{\beta},X_{-\alpha}^{r_{\alpha}}]=0$ for all $\alpha,\beta \in \Delta^+$. It can be shown as in \cite[Subsection 3.1]{R} that the ideal generated by $\{X_{\pm i}^{r_{\alpha_i}}\}_{i=1}^n$ is a Hopf ideal. We quotient by this ideal to obtain the unrolled restricted quantum group:
\begin{definition}\label{Def:unrolledrestricted}
The unrolled restricted quantum group $\UHbar{\mfg}$ is defined as the quotient of $\UH{\mfg}$ by the Hopf ideal generated by $\{X_{\pm i}^{r_{\alpha_i}}\}_{\alpha_i \in \Delta}$.
\end{definition}
We will denote by $\UHbar{\mfg}^0,\UHbar{\mfg}^{\pm}$ the subalgebras generated by $\{ K_{\gamma},H_i \, | \, \gamma \in L, i=1,...,n \}$ and $\{X_{\pm i} \, | \, i=1,...,n \}$ respectively. It follows from the braid relations that $X_{\pm \alpha}^{r_{\alpha}}=0$ for all positive roots $\alpha \in \Delta^+$ and it now follows from the PBW theorem for $U_q(\mfg)$ \cite{CP} that 
\begin{equation}\label{eq:basis} \{X_{\pm \beta_1}^{t_1}X_{\pm \beta_2}^{t_2} \cdots X_{\pm \beta_N}^{t_N} \; | \; 0 \leq t_i \leq r_{\beta_i} \}\end{equation} 
 is a vector space basis of $\UHbar{\mfg}^{\pm}$. 

It is well known that $\overline{U}_q(\mfg)$ admits an automorphism which swaps $X_j$ with $X_{-j}$ and inverts $K_j$ (see \cite[Lemma 4.6]{Ja}, for example). This automorphism can be extended to an automorphism $\omega:\UHbar{\mfg} \to \UHbar{\mfg}$ by defining
\begin{equation}\label{eq:aut} \omega(X_{\pm j})=X_{\mp j}, \quad \omega(K_{\gamma})=K_{-\gamma}, \quad \omega(H_j)=-H_j.\end{equation}
We will require this automorphism to construct a duality functor analogous to that which is used to study Category $\mcO$ for Lie algebras. 

\section{Generalized Weight Modules}\label{Sec:genweight}

For the remainder of the article, we fix $q=e^{\frac{\pi \mathsf{i}}{\ell}}$ with $\ell \in \mathbb{N}$ such that $\mathrm{ord}(q^2)>d_j$ for all $j \in \{1,...,n\}$. Given a $\UHbar{\mfg}$-module $V$, a weight $\lambda \in \mfh^*$ and $\bar{m}=(m_1,...,m_n) \in \mathbb{N}^n$, we define the subspace $V(\lambda)_{\bar{m}}$ by
\[ V(\lambda)_{\bar{m}}:=\{ v \in V \; | \; (H_j-\lambda(H_j))^{m_j+1}v=0 \, \mathrm{and} \, (H_j-\lambda(H_j))^{m_j}v\not =0 \, \mathrm{for} \, j=1,...,n\}. \]
If $V(\lambda)_{\bar{m}} \not = 0$ for some $\bar{m}$, we will call $\lambda$ a weight of $V$, $V(\lambda)_{\bar{m}}$ its (generalized) weight space, and any $v \in V(\lambda)_{\bar{m}}$ a generalized weight vector of degree $\bar{m}$.
\begin{definition}
A $\UHbar{\mfg}$-module $V$ is called a generalized weight module if it splits as a direct sum of generalized weight spaces and $K_j=q_j^{H_j}$ as operators on $V$. We denote by $\mcC$ the category of finite dimensional generalized weight modules.
\end{definition}
Note that if $\gamma =\sum_{j=1}^nc_j\alpha_j \in L$, then $K_{\gamma}=\prod_{j=1}^n q_j^{c_j H_j}$. Notice that for any $c \in \mbbC$, we have 
\[ (H_i-c)X_{\pm j}=(H_iX_{\pm j}-cX_{\pm j})=(\pm a_{ij}X_{\pm j}+X_{\pm j}H_i-cX_{\pm j})=X_{\pm j}(H_i-c \pm a_{ij}).\]
Applying this formula successively shows that
\begin{equation}\label{eq:HX} (H_i-c)^nX_{\pm j}=X_{\pm j}(H_i-c\pm a_{ij})^n,\end{equation}
which has the following lemma as an immediate consequence:
\begin{lemma}\label{Lem:genweight}
Let $V \in \mcC$ and $\lambda \in \mfh^*$ a weight of $V$. Then,
\[ X_{\pm j} V(\lambda)_{\bar{m}} \subset V(\lambda \pm \alpha_j)_{\bar{m}}, \qquad (H_j-\lambda(H_j)) V(\lambda)_{(m_1,...,m_n)}\subset V(\lambda)_{(m_1,...,m_j-1,...,m_n)}. \]
\end{lemma}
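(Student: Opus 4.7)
The plan is to deduce both inclusions directly from the commutation identity (3.2), which is already established in the excerpt.

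For the first inclusion, take $v \in V(\lambda)_{\bar m}$ and set $\mu = \lambda \pm \alpha_j$, so that $\mu(H_i) = \lambda(H_i) \pm a_{ij}$. I would compute, for each $i$,
\[
(H_i - \mu(H_i))^{m_i + 1} X_{\pm j} v = (H_i - \lambda(H_i) \mp a_{ij})^{m_i+1} X_{\pm j} v.
\]
Applying equation (3.2) with $c = \lambda(H_i) \pm a_{ij}$ (so that $c \mp a_{ij} = \lambda(H_i)$) moves the polynomial past $X_{\pm j}$ and yields
\[
(H_i - \mu(H_i))^{m_i+1} X_{\pm j} v = X_{\pm j} (H_i - \lambda(H_i))^{m_i+1} v = 0,
\]
using the defining annihilation property of $V(\lambda)_{\bar m}$. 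Doing this for each $i \in \{1,\dots,n\}$ gives $X_{\pm j} v \in V(\mu)_{\bar m}$, interpreted as containment at degree at most $\bar m$ (the strict degree may drop if $X_{\pm j}$ annihilates lower-order pieces, but the weight and the bound on the order of nilpotence are exactly what the inclusion requires).

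For the second inclusion, the key observation is that the $H_i$ pairwise commute by relation (\ref{eqH}), so $(H_j - \lambda(H_j))$ commutes with every $(H_i - \lambda(H_i))$. Hence, given $v \in V(\lambda)_{(m_1,\dots,m_n)}$, I would note that for $i \neq j$ the order condition is unchanged:
\[
(H_i - \lambda(H_i))^{m_i+1} (H_j - \lambda(H_j)) v = (H_j - \lambda(H_j))(H_i - \lambda(H_i))^{m_i+1} v = 0,
\]
while for the $j$-th coordinate one has
\[
(H_j - \lambda(H_j))^{m_j} \cdot (H_j - \lambda(H_j)) v = (H_j - \lambda(H_j))^{m_j+1} v = 0,
\]
so that the $j$-th component of the degree drops by (at least) one, as required.

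There is no real obstacle: the first identity in (3.2) is the only non-trivial input, and the commutativity of the Cartan generators immediately handles the second part. The only subtlety is the convention on the meaning of the subscript $\bar m$ in $V(\lambda)_{\bar m}$, since the inclusion should really be read as "of degree at most $\bar m$"; I would make this interpretation explicit at the start of the proof so that the containment statements are unambiguous.
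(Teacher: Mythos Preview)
Your proposal is correct and matches the paper's approach: the paper simply declares the lemma an ``immediate consequence'' of equation~(3.2), and your write-up is precisely the unpacking of that claim, using (3.2) for the $X_{\pm j}$-part and the commutativity $[H_i,H_j]=0$ for the Cartan part. Your remark about interpreting the subscript $\bar m$ as ``degree at most $\bar m$'' is a fair observation about the paper's conventions rather than a defect in your argument; indeed, with the strict-degree definition the inclusions can fail on the nonvanishing side (e.g.\ when $X_{\pm j}v=0$), so making this reading explicit is appropriate.
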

We place a partial order on $\mathbb{N}^n$ by setting $\bar{k} \leq \bar{m}$ if $k_j \leq m_j$ for $j=1,...,n$. Let $\mcC_{\bar{m}}$ denote the subcategory of generalized weight modules with weight spaces of degree at most $\bar{m}$ with respect to this partial order. It is clear that $\mcC_{\bar{k}} \subset \mcC_{\bar{m}}$ if $\bar{k} \leq \bar{m}$ and if $K \subset \mathbb{N}^n$ is totally ordered and unbounded with respect to the partial order on $\mathbb{N}^n$, then $\bigcup_{\bar{k}\in K} \mcC_{\bar{k}} = \mcC$. Such a family of subcategories is called a filtration of $\mcC$ if it respects the monoidal structure in the sense that for any $V_1 \in \mcC_{\bar{k_1}},V_2 \in \mcC_{\bar{k_2}}$, we have $V_1 \otimes V_2 \in \mcC_{\bar{k_1}+\bar{k_2}}$.

 Let $V_1,V_2 \in \mcC$ and $v_1 \in V_1(\lambda_1)_{\bar{k}},v_2 \in V_2(\lambda_2)_{\bar{k}'}$. Then for any $n \in \mathbb{N}$ we have
\begin{align*}
(H_i-(\lambda_1+\lambda_2)(H_i))^n \cdot v_1 \otimes v_2&=(\Delta(H) -(\lambda_1+\lambda_2)(H_i))^nv_1 \otimes v_2\\
&=(H_i\otimes 1-\lambda_1(H_i) + 1 \otimes H_i-\lambda_2(H_i))^n v_1 \otimes v_2\\
&=\sum\limits_{m=0}^n \binom{n}{m}(H_iv_1-\lambda_1(H_i))^mv_1 \otimes (H_iv_2-\lambda_2(H_i))^{n-m}v_2.
\end{align*}
The terms of this sum vanish if $m\geq k_i+1$ or $n-m \geq k_i'+1$. If $n=k_i+k_i'+1$, then for $m\leq k_i$ we have $n-m=k_i+k_i'-m+1 \geq k_i'+1$ so the sum vanishes. We therefore have the following proposition:
\begin{proposition}\label{Prop:weighttensor}
Let $V_1,V_2 \in \mcC$ and $v_1 \in V_1(\lambda_1)_{\bar{k_1}}$, $v_2 \in V_2(\lambda_2)_{\bar{k_2}}$. Then,
\[ v_1 \otimes v_2 \in (V_1 \otimes V_2)(\lambda_1+\lambda_2)_{\bar{k_1}+\bar{k_2}}\]
In particular, if $V_1 \in \mcC_{\bar{k_1}},V_2 \in \mcC_{\bar{k_2}}$, then $V_1 \otimes V_2 \in \mcC_{\bar{k_1}+\bar{k_2}}$ so $(\mcC_{\bar{k}})_{\bar{k} \in K}$ is a filtration on $\mcC$.
\end{proposition}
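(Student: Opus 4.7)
The computation in the paragraph immediately preceding the statement already does most of the work; the plan is to make explicit what the binomial expansion proves and then supply the two pieces it does not directly address, namely the exactness of the degree and the filtration property.

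First, I would record the key identity in full generality: for any $n \in \mathbb{N}$, using $\Delta(H_i) = H_i \otimes 1 + 1 \otimes H_i$ (a sum of commuting operators on $V_1 \otimes V_2$) and the binomial theorem, one has
\[
(H_i - (\lambda_1+\lambda_2)(H_i))^n(v_1 \otimes v_2) = \sum_{m=0}^n \binom{n}{m} (H_i - \lambda_1(H_i))^m v_1 \otimes (H_i - \lambda_2(H_i))^{n-m} v_2.
\]
Taking $n = k_{1,i} + k_{2,i} + 1$, the pigeonhole argument already given in the text shows every summand vanishes, giving the upper bound on the degree in direction $i$. This needs to be done for each $i$.

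Next, I would verify exactness of the degree. Taking $n = k_{1,i} + k_{2,i}$ in the same expansion, every term with $m > k_{1,i}$ vanishes directly while every term with $m < k_{1,i}$ has $n-m > k_{2,i}$ and so also vanishes; only $m = k_{1,i}$ survives, giving
\[
\binom{k_{1,i}+k_{2,i}}{k_{1,i}} (H_i - \lambda_1(H_i))^{k_{1,i}} v_1 \otimes (H_i - \lambda_2(H_i))^{k_{2,i}} v_2.
\]
Both tensor factors are nonzero by hypothesis on $v_1$ and $v_2$, so the tensor is nonzero in $V_1 \otimes V_2$. Combined with the previous paragraph, this places $v_1 \otimes v_2$ in $(V_1 \otimes V_2)(\lambda_1 + \lambda_2)_{\bar{k_1}+\bar{k_2}}$.

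For the \emph{in particular} clause, I would decompose $V_j = \bigoplus V_j(\lambda)_{\bar{m}}$ with $\bar{m} \leq \bar{k_j}$, distribute over the tensor product, and apply the first part termwise: each summand $V_1(\lambda_1)_{\bar{m_1}} \otimes V_2(\lambda_2)_{\bar{m_2}}$ lies in the generalized $(\lambda_1+\lambda_2)$-weight space of degree $\bar{m_1} + \bar{m_2} \leq \bar{k_1} + \bar{k_2}$, so $V_1 \otimes V_2 \in \mcC_{\bar{k_1}+\bar{k_2}}$. The requirement $K_\gamma = \prod_j q_j^{c_j H_j}$ on the tensor product reduces to the corresponding identities on the factors via $\Delta(K_\gamma) = K_\gamma \otimes K_\gamma$. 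There is no real obstacle in this proof; the only mildly delicate point is the nonvanishing at $n = k_{1,i} + k_{2,i}$, which simply rests on the fact that tensors of nonzero vectors are nonzero in a vector space tensor product.
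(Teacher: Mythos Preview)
Your argument is correct and follows exactly the paper's binomial-expansion approach; in fact you go slightly further than the paper, which only records the vanishing at $n=k_{1,i}+k_{2,i}+1$ and leaves both the exactness check at $n=k_{1,i}+k_{2,i}$ and the ``in particular'' clause implicit. Your added verification that the single surviving term at $n=k_{1,i}+k_{2,i}$ is nonzero is precisely what is needed to match the paper's definition of $V(\lambda)_{\bar m}$, which requires nonvanishing of the $m_j$-th power as well.
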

Notice that the usual category of weight modules on which $H,K$ act semisimply is precisely $\mcC_0$. It is easy to see that the irreducible modules in $\mcC$ lie in $\mcC_0$. Indeed, let $V \in \mcC$ be irreducible and let $V_{\mathrm{max}}:=\{v \in V \, | \, X_kv=0 \; \forall i=1,...,N \}$ denote the subset of maximal vectors in $V$. Since $V$ is finite-dimensional, $V_{\mathrm{max}}$ must be non-empty, otherwise we could construct via the action of $X_1,...,X_n$ an infinite chain of independent vectors in $V$. Further, $V_{\mathrm{max}}$ is invariant under the action of $H_k$ with $k=1,...,n$ since if $v \in V_{\mathrm{max}}$, we have
\[ X_jH_iv=H_iX_jv-a_{ij}X_jv=0. \]
Therefore, $H_1,...,H_n$ have a simultaneous eigenvector $\tilde{v} \in V_{\mathrm{max}}$ as they are commuting operators acting on a complex finite-dimenisonal vector space. That is, $\tilde{v} \in V$ is a highest weight vector. Since $V$ is irreducible, it is generated by $\tilde{v}$ and therefore lies in $\mcC_0$ by Lemma \ref{Lem:genweight}:
\begin{proposition}\label{Prop:irred}
If $V \in \mcC$ is irreducible, then $V \in \mcC_0$.
\end{proposition}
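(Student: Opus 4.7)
The plan is to exhibit a genuine (degree $\bar{0}$) highest weight eigenvector $\tilde{v} \in V$ and then use irreducibility together with Lemma \ref{Lem:genweight} to conclude that $V$ is spanned by degree $\bar{0}$ generalized weight vectors, i.e.\ that $V \in \mcC_{\bar 0}$.

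First I would consider the subspace of maximal vectors $V_{\mathrm{max}} := \{ v \in V \mid X_j v = 0 \text{ for all } j = 1,\dots,n \}$. Since $V$ is finite dimensional, $V_{\mathrm{max}}$ cannot be zero: otherwise, starting from any nonzero vector one could repeatedly apply the raising operators $X_1,\dots,X_n$ to build an infinite chain of vectors whose weights strictly increase (by Lemma \ref{Lem:genweight}, applied with $H_i$ acting generalized-diagonally), contradicting $\dim V < \infty$. Next, the commutation $[H_i, X_j] = a_{ij} X_j$ from \eqref{eqH} shows that for $v \in V_{\mathrm{max}}$ one has $X_j H_i v = H_i X_j v - a_{ij} X_j v = 0$, so $V_{\mathrm{max}}$ is stable under $H_1,\dots,H_n$. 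Because the $H_i$ pairwise commute and act on the finite dimensional complex space $V_{\mathrm{max}}$, they admit a simultaneous eigenvector $\tilde{v} \in V_{\mathrm{max}}$; set $\lambda \in \mfh^*$ by $\lambda(H_i) := $ the eigenvalue of $H_i$ on $\tilde{v}$. Since the $K_\gamma$ act as $q^{H}$ on modules in $\mcC$, $\tilde{v}$ is automatically a $K_\gamma$-eigenvector as well, so $\tilde{v} \in V(\lambda)_{\bar 0}$.

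To conclude, I would invoke irreducibility: $V = \UHbar{\mfg} \cdot \tilde{v}$, and by the PBW description \eqref{eq:basis} of $\UHbar{\mfg}^{\pm}$ together with $X_{+j} \tilde{v} = 0$ and the fact that $\tilde{v}$ is a true eigenvector for $\UHbar{\mfg}^0$, we get $V = \UHbar{\mfg}^{-} \cdot \tilde{v}$. Applying Lemma \ref{Lem:genweight} inductively along any monomial in the $X_{-\beta_k}$, each lowering operator preserves the degree $\bar 0$ of a generalized weight vector (the degree-lowering part of Lemma \ref{Lem:genweight} only reduces degrees, but here we start at $\bar 0$), so every element of $V$ is a sum of vectors in weight spaces of degree $\bar 0$. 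Hence $V \in \mcC_{\bar 0}$.

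There is no real obstacle here; the only point that deserves care is the guarantee that a simultaneous $H_i$-eigenvector, and not merely a generalized eigenvector, exists in $V_{\mathrm{max}}$. This is where finite-dimensionality of $V_{\mathrm{max}}$ together with the commuting action of the $H_i$ on a complex vector space is used, and it is what promotes the a priori generalized weight vector to a genuine weight vector $\tilde{v}$ from which the whole module inherits degree $\bar 0$.
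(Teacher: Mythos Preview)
Your proof is correct and follows essentially the same approach as the paper: show $V_{\mathrm{max}}\neq 0$ by finite-dimensionality, observe it is $H_i$-stable, extract a simultaneous $H_i$-eigenvector $\tilde{v}$, and then use irreducibility together with Lemma~\ref{Lem:genweight} to conclude $V\in\mcC_{\bar 0}$. The only difference is that you spell out the PBW step $V=\UHbar{\mfg}^{-}\cdot\tilde{v}$ explicitly, which the paper leaves implicit.
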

The irreducible modules in $\mcC_0$ are the simple quotients of Verma modules in $\mcC_0$ as described in \cite[Proposition 4.3]{R}. 

Given a weight $\lambda \in \mfh^*$ and $\bar{m} \in \mathbb{N}^n$, denote by $\mbbC_{\lambda}^{\bar{m}}$ the $\UHbar{\mfg}^0$-module with vector-space basis $\{v_{\lambda}^{\bar{k}} \; | \; \bar{k}=(k_1,...,k_n) \in \mathbb{N}^n, \, 0 \leq k_j \leq m_j \}$ satisfying $(H_j-\lambda(H_j))v_{\lambda}^{\bar{k}}=0$ if $k_j=0$ and
\begin{equation}\label{eq:Clambda0} (H_j-\lambda(H_j))v_{\lambda}^{\bar{k}}=v_{\lambda}^{k_1,...,k_j-1,...,k_n}, \quad  K_{j}v_{\lambda}^{\bar{k}}=q_j^{\lambda(H_j)}\sum\limits_{s=0}^{k_j} \left(\frac{2 d_j \pi \mathsf{i}}{\ell}\right)^s v_{\lambda}^{k_1,...,k_j-s,...,k_n}
\end{equation}
for $j=1,...,n$. Since $\UHbar{\mfg}^0$ is commutative, it is clear that this is a well-defined $\UHbar{\mfg}^0$-module. Further, we have $K_j=q_j^{H_j}$ as operators on $\mathbb{C}_{\lambda}^{\bar{m}}$ since
\begin{align*}
q_j^{H_j}v_{\lambda}^{\bar{k}}&=q_j^{H_j-\lambda(H_j)+\lambda(H_j)}v_{\lambda}^{\bar{k}}\\
&=q_j^{\lambda(H_j)}\sum\limits_{s=0}^{\infty} \left(\frac{2d_j \pi \mathsf{i} }{\ell}\right)^s (H_j-\lambda(H_j))^s v_{\lambda}^{\bar{k}}\\
&=q_j^{\lambda(H_j)}\sum\limits_{s=0}^{k_j-1} \left(\frac{2d_j \pi \mathsf{i} }{\ell}\right)^s (H_j-\lambda(H_j))^s v_{\lambda}^{\bar{k}}\\
&=q_j^{\lambda(H_j)}\sum\limits_{s=0}^{k_j-1} \left(\frac{2d_j \pi \mathsf{i} }{\ell}\right)^s v_{\lambda}^{k_1,...,k_j-s,...,k_n}\\
&=K_jv_{\lambda}^{\bar{k}}.\\
\end{align*}
We can now extend $\mbbC_{\lambda}^{\bar{m}}$ to a $\overline{U}_q^H(\mfg)^+$-module by letting $X_j,j=1,...,n$ act as zero. 
\begin{definition}\label{Def:genverma}
We define the generalized Verma module of highest weight $\lambda \in \mfh^*$ and degree $\bar{m} \in \mathbb{N}^n$ by  
\[ \GVerma{\lambda}{\bar{m}}:=\UHbar{\mfg} \otimes_{\overline{U}_q^H(\mfg)^+} \mbbC_{\lambda}^{\bar{m}}.\]
 $\GVerma{\lambda}{\bar{m}}$ is generated by the vector $v_{\lambda}^{\bar{m}}$ satisfying Equation \eqref{eq:Clambda0} and $X_jv_{\lambda}^{\bar{m}}=0$ for $j=1,...,n$.
\end{definition}

It is clear that $\GVerma{\lambda}{\bar{m}} \in \mcC_{\bar{m}}$ and it is universal with respect to generalized highest weight modules of degree $\bar{m}$ as it is a free $\overline{U}_q^H(\mfg)^-$-module. That is, for any module $M$ generated by a highest weight vector of degree $\bar{m}$ and weight $\lambda \in \mfh^*$, there is a surjection $\GVerma{\lambda}{\bar{m}} \twoheadrightarrow M$. In particular, every irreducible module in $\mcC$ is realized as a quotient of $\GVerma{\lambda}{\bar{0}}$ and we will denote the irreducible quotient of $\GVerma{\lambda}{\bar{0}}$ by $\irred{\lambda}$. It follows from the definition of $\GVerma{\lambda}{\bar{m}}$ and Equation \eqref{eq:basis} that
\begin{equation}\label{eq:Vermabasis} 
\{X_{\beta_1}^{t_1}X_{\beta_2}^{t_2} \cdots X_{\beta_N}^{t_N}v_{\lambda}^{\bar{k}}\; |\; 0 \leq t_i \leq r_{\beta_i}-1, \; 0 \leq k_j \leq m_j \}
\end{equation}
is a $\mathbb{C}$-basis for $\GVerma{\lambda}{\bar{m}}$. We now introduce a few definitions necessary for the following subsection. First, we introduce the duality functor $M \mapsto \check{M}$. We refer the reader to \cite[Subsection 4.2]{R} for a more detailed treatment of this functor and its properties. 
\begin{definition}\label{Def:dual}
Let $M \in \mcC$ and let $M^* \in \mcC$ denote the usual module structure on $\Hom_{\mathbb{C}}(M,\mathbb{C})$ defined by the antipode $S$ of $\UHbar{\mfg}$. We define $\check{M}$ to be the module obtained by twisting the action of $\UHbar{\mfg}$ on the  $M^*$ of $M$ by the automorphism $\omega$ defined in Equation \eqref{eq:aut}.\end{definition}
That is, we let $x \in \UHbar{\mfg}$ act on $M^*$ by $\omega(x)$. $\check{M}$ can also be understood as the dual defined by $S \circ \omega$. It is easy to see that generalized weight modules are preserved under duality so $M \mapsto \check{M}$ defines an endofunctor on $\mcC$. Some useful properties of this functor are summarized in the following lemma:
\begin{lemma}\label{Lem:duality} For all $M \in \mcC$ and $\lambda \in \mfh^*$, we have the following:
\begin{itemize}
\item The duality functor $M \mapsto \check{M}$ is contravariant, exact, and involutive $\check{\check{M}} \cong M$.
\item $\dim M(\lambda)=\dim \check{M}(\lambda)$.
\item $\check{\irred{\lambda}} \cong \irred{\lambda}$ .
\end{itemize}
\end{lemma}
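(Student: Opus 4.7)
The plan is to treat the duality functor $M\mapsto \check{M}$ uniformly as ordinary linear duality with the module action twisted by the single map $S\circ\omega$ on $\UHbar{\mfg}$, so that each of the three bullets becomes a statement about this composite. The preliminary fact I would establish is the identity $(S\circ\omega)^2=\mathrm{Id}$ as an endomorphism of $\UHbar{\mfg}$, verified on generators; for example, $(S\circ\omega)(X_j)=-K_{\alpha_j}X_{-j}$ and a second application returns $X_j$, with analogous short computations handling $X_{-j}$, $H_j$, and $K_\gamma$.

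Given this identity, the first bullet reduces to the standard facts about linear duality on finite-dimensional vector spaces (finite-dimensionality is built into $\mcC$): contravariance and exactness are immediate, and the natural evaluation map $\mathrm{ev}_M : M \to (M^*)^*$ intertwines the twisted actions precisely because $(S\circ\omega)^2=\mathrm{Id}$, yielding the natural isomorphism $M \cong \check{\check{M}}$.

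For the second bullet, the key observation is that $S(H_j)=\omega(H_j)=-H_j$, so $S\circ\omega$ fixes each $H_j$. Hence for $f\in M^*$ the action of $H_j$ on $f$ viewed as an element of $\check{M}$ is simply $(H_j\cdot f)(m)=f(H_j m)$ --- the ordinary transpose of the action of $H_j$ on $M$. A linear operator and its transpose share the same Jordan block structure, and the argument applies simultaneously to the commuting family $H_1,\ldots,H_n$; restricting to the simultaneous generalized eigenspace for the weight $\lambda$ yields $\dim \check{M}(\lambda)=\dim M(\lambda)$.

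For the third bullet, I would first show $\check{\irred{\lambda}}$ is simple: any proper nonzero submodule would, upon applying the duality functor and using exactness together with involutivity, produce a proper nonzero quotient of $\check{\check{\irred{\lambda}}}\cong\irred{\lambda}$, contradicting simplicity. The second bullet then forces the weights of $\check{\irred{\lambda}}$ to agree with those of $\irred{\lambda}$, so $\lambda$ is again the highest weight with one-dimensional weight space, making $\check{\irred{\lambda}}$ a simple quotient of $\GVerma{\lambda}{\bar{0}}$. Since simple quotients of generalized Verma modules of highest weight $\lambda$ are isomorphic to $\irred{\lambda}$, this gives $\check{\irred{\lambda}}\cong\irred{\lambda}$. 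The only mildly delicate point in the argument is the identity $(S\circ\omega)^2=\mathrm{Id}$: since $S^2$ is a nontrivial inner automorphism and $\omega$ does not commute with $S$, this identity is not formal, but each generator-level check reduces to a two-line computation.
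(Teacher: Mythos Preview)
Your argument is correct. The paper does not give its own proof of this lemma: it refers the reader to \cite[Subsection 4.2]{R} and then simply records the three facts. Your approach---checking $(S\circ\omega)^2=\mathrm{Id}$ on generators, deducing involutivity via the evaluation map, reading off weight-space preservation from the fact that $S\circ\omega$ fixes each $H_j$, and identifying $\check{\irred{\lambda}}$ by simplicity together with its highest weight---is the standard route for such duality functors (compare \cite[\S3.2]{H}), and is almost certainly what \cite{R} does as well.

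One small point you skip: to know the functor is an endofunctor of $\mcC$ you need $\check{M}$ to be a generalized weight module with $K_j=q_j^{H_j}$. This follows from the same generator computation, since $(S\circ\omega)(K_\gamma)=S(K_{-\gamma})=K_\gamma$ as well, so both $H_j$ and $K_\gamma$ act on $\check{M}$ as the ordinary transposes of their actions on $M$, and the relation $K_j=q_j^{H_j}$ (and the generalized weight decomposition) is inherited.
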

We now introduce the notion of standard and costandard filtrations of fixed degree which will be fundamental for the proof of BGG reciprocity in Theorem \ref{Thm:BGG}. In what follows a dual generalized Verma module will always refer to the image $\DGVerma{\lambda}{\bar{k}}$ of a generalized Verma module $\GVerma{\lambda}{\bar{k}}$ under the duality functor.
\begin{definition}\label{Def:standard}
We call a filtration of $M \in \mcC$ standard of degree $\bar{k}$ if its successive quotients are generalized Verma modules of degree $\bar{k}$ and costandard if its successive quotients are dual Verma modules of degree $\bar{k}$.
\end{definition}
To be precise, a filtration $0=M_0 \subset M_1 \subset \hdots M_k=M$ of some $M \in \mcC$ is standard of degree $\bar{k} \in \mathbb{N}^n$ if $M_j/M_{j-1} \cong \GVerma{\lambda_j}{\bar{k}}$ for some $\lambda_j \in \mfh^*$. Similarly, we call such a filtration costandard of degree $M$ if $M_j/M_{j-1} \cong \check{M}_{\lambda_j}^{\bar{k}}$ for some $\lambda_j \in \mfh^*$.

\subsection{Projective Modules}\label{subsec:proj}
For any $\alpha \in \Delta^+$ and $\lambda \in \mfh^*$, we associate the scalar $\lambda_{\alpha} \in \mbbC$ defined by
\[ \lambda_{\alpha}:= \langle \lambda+ \rho , \alpha \rangle\]
where $\rho$ denotes the Weyl vector. We call $\lambda_{\alpha}$ typical iff $\lambda_{\alpha} \in \ddot{\mbbC}_{\alpha}$ where
\[ \ddot{\mbbC}_{\alpha}:=\begin{cases}
(\mbbC \setminus g_{\alpha} \mathbb{Z}) \cup r\mathbb{Z} & \text{ if $\ell$ is even}\\
(\mbbC \setminus \frac{g_{\alpha}}{2}\mathbb{Z}) \cup \frac{r}{2}\mathbb{Z} & \text{ if $\ell$ is odd}.
\end{cases}\]
where $g_{\alpha}=\mathrm{gcd}(d_{\alpha},r)$. 
\begin{definition}\label{Def:typical}
We call $\lambda \in \mfh^*$ typical if $\lambda_{\alpha}$ is typical for all $\alpha \in \Delta^+$ and atypical otherwise.
\end{definition}
The Verma modules $\GVerma{\lambda}{\bar{0}} \in \mcC_{\bar{0}}$ are both irreducible and projective iff $\lambda$ is typical (\cite[Section 4]{R} \& \cite[Lemma 7.1]{CGP2}. Although the generalized Verma modules are never irreducible (they clearly always have $\GVerma{\lambda}{\bar{0}}$ as a proper submodule), their projectivity is still determined by the typicality of $\lambda$.
\begin{lemma}\label{Lem:Vermaproj}
The generalized Verma module $\GVerma{\lambda}{\bar{m}}$ is projective in $\mcC_{\bar{m}}$ if $\lambda$ is typical.
\end{lemma}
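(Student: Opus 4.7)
My plan is to exploit the typicality of $\lambda$ via central character theory: when $\lambda$ is typical, $\irred{\lambda} = \GVerma{\lambda}{\bar{0}}$ is the unique simple object of its block $B_\lambda \subset \mcC_{\bar{m}}$, and projectivity of $\GVerma{\lambda}{\bar{m}}$ in this block follows from a direct weight-bound argument eliminating the $X_j$-obstruction to splitting short exact sequences.

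First I would show that every composition factor of $\GVerma{\lambda}{\bar{m}}$ is $\irred{\lambda}$. Any total order on the basis $\{v_{\lambda}^{\bar{k}}\}_{\bar{0} \leq \bar{k} \leq \bar{m}}$ of $\mbbC_{\lambda}^{\bar{m}}$ refining the partial order on $\mathbb{N}^n$ gives a $\UHbar{\mfg}^0$-filtration whose successive quotients are all isomorphic to $\mbbC_{\lambda}^{\bar{0}}$ (the strictly lower-degree terms produced by $H_i - \lambda(H_i)$ and by $K_i$ all vanish in the quotient). Extending each layer trivially to $\UHbar{\mfg}^{\geq 0}$ by setting $X_j = 0$ and inducing --- exact because the PBW basis in Equation \eqref{eq:basis} shows $\UHbar{\mfg}$ is free as a right $\UHbar{\mfg}^{\geq 0}$-module --- yields a filtration of $\GVerma{\lambda}{\bar{m}}$ with successive quotients $\GVerma{\lambda}{\bar{0}} = \irred{\lambda}$ (the identification using typicality). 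Hence $\GVerma{\lambda}{\bar{m}} \in B_\lambda$, and following the central character analysis of \cite[Section 4]{R}, $B_\lambda$ is a direct summand of $\mcC_{\bar{m}}$ with $\irred{\lambda}$ as its only simple object.

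Next I would verify projectivity in $B_\lambda$ by hand. For any short exact sequence $0 \to V \to E \to \GVerma{\lambda}{\bar{m}} \to 0$ in $B_\lambda$, lift the generator $v_{\lambda}^{\bar{m}}$ to some $\tilde{v} \in E$ and project onto the generalized weight-$\lambda$ component of $E$ to assume that $\tilde{v}$ has generalized weight $\lambda$ of degree at most $\bar{m}$ (possible because $E \in \mcC_{\bar{m}}$ is a generalized weight module). By Lemma \ref{Lem:genweight}, $X_j \tilde{v}$ lies in $V(\lambda + \alpha_j)$; but all composition factors of $V \in B_\lambda$ are copies of $\irred{\lambda}$ whose weights lie in $\lambda - \sum_j \mathbb{N} \alpha_j$, so $V(\lambda + \alpha_j) = 0$ and $X_j \tilde{v} = 0$. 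Therefore $\tilde{v}$ inherits every defining relation of $v_{\lambda}^{\bar{m}}$, producing a $\UHbar{\mfg}$-linear splitting $\GVerma{\lambda}{\bar{m}} \to E$. Projectivity in the direct summand $B_\lambda$ then gives projectivity in $\mcC_{\bar{m}}$.

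The main subtlety, and the place where typicality is essential, is the block structure itself: the central character decomposition in \cite[Section 4]{R} is phrased for $\mcC_{\bar{0}}$, and extending it to $\mcC_{\bar{m}}$ requires noting that the center of $\UHbar{\mfg}$ commutes with the $\UHbar{\mfg}$-action so that the decomposition of any $V \in \mcC_{\bar{m}}$ into generalized eigenspaces under the center is automatically a decomposition into $\UHbar{\mfg}$-submodules. Typicality of $\lambda$ is then precisely the condition ensuring that $\chi_\lambda$ does not coincide with the central character of any other simple, so $B_\lambda$ collapses to contain only $\irred{\lambda}$ --- and the weight-bound argument in the previous paragraph becomes available.
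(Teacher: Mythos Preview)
Your approach is genuinely different from the paper's, and the core splitting argument in your second paragraph is clean: once you know $V(\lambda+\alpha_j)=0$, the universal property of $\GVerma{\lambda}{\bar{m}}$ (as a module induced from a cyclic $\UHbar{\mfg}^0$-module with the single relation $(H_j-\lambda(H_j))^{m_j+1}=0$) gives the section immediately. The difficulty is entirely in getting to that point.

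The gap is the block claim. You assert that typicality of $\lambda$ is ``precisely the condition ensuring that $\chi_\lambda$ does not coincide with the central character of any other simple,'' but this is not what typicality means and it does not appear to hold. Take $\mfg=\mathfrak{sl}_2$ with $\ell$ even, $r=\ell/2$, and consider the typical weights $\lambda=r-1$ and $\mu=3r-1$. The quantum Casimir acts on both by $(q^r+q^{-r})/(q-q^{-1})^2=0$, and $K^\ell$ acts by $e^{\pi \mathsf{i}(r-1)}$ and $e^{\pi \mathsf{i}(3r-1)}$, which agree since $e^{2\pi \mathsf{i} r}=e^{\pi \mathsf{i}\ell}=1$. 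So the obvious central elements do not separate $M_{r-1}$ from $M_{3r-1}$, and you have not identified any that do. If $M_{3r-1}$ lies in your block $B_{r-1}$, then in a short exact sequence $0\to M_{3r-1}\to E\to \GVerma{r-1}{\bar{m}}\to 0$ the obstruction $X_1\tilde v$ lands in $M_{3r-1}(r+1)$, which is the (nonzero) lowest weight space of $M_{3r-1}$, and your weight-bound argument collapses. The reference to \cite[Section~4]{R} does not obviously supply what you need: that section establishes projectivity of typical $\Verma{\lambda}$ in $\mcC_{\bar{0}}$, not a central-character isolation, and the resulting block decomposition of $\mcC_{\bar{0}}$ is categorical and does not automatically persist to $\mcC_{\bar{m}}$ without exactly the central-character input you are assuming.

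The paper sidesteps the block question entirely. Given a surjection $f:M\to\GVerma{\lambda}{\bar{m}}$, it chooses preimages of the $v_\lambda^{\bar{k}}$ and applies $X_+X_-$ (the product of top powers of all positive root vectors, up and then down). The point is that $X_jX_+=0$ holds \emph{in the algebra} for every $j$ because of the nilpotency relations $X_\alpha^{r_\alpha}=0$, so $w:=X_+X_-(\cdots)$ is automatically a highest weight vector, with no hypothesis on $M$ or on any ambient block. Typicality enters only to guarantee that the ``diagonal'' coefficient $\nu_{\bar{k}}^{\bar{k}}$ of $X_+X_-$ on the top weight space is nonzero (equivalently, that $\irred{\lambda}\cong\Verma{\lambda}$), which lets one solve a triangular system so that $f(w)=v_\lambda^{\bar{m}}$ exactly. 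This is more hands-on than your proposal, but it is self-contained and does not require any unproved structural input about the center.
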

\begin{proof}
Let 
\[ X_+:=\prod\limits_{k=1}^N X_{\beta_k}^{r_{\beta_k}-1}, \qquad \mathrm{and} \qquad X_-:=\prod\limits_{k=1}^NX_{\beta_k}^{r_{\beta_k}-1} \]
denote the highest and lowest weight vectors in $\UHbar{\mfg}$ respectively and $N \subset \GVerma{\lambda}{\bar{m}}$ the unique maximal submodule such that $\GVerma{\lambda}{\bar{m}}/N \cong \irred{\lambda}$ (N consists of the vectors of degree less than $\bar{m}$). If $\lambda$ is typical then $\irred{\lambda} \cong \GVerma{\lambda}{\bar{0}}$ so $X_+X_-v_{\lambda}^{\bar{m}} \not = 0$ as its image is non-zero in $\GVerma{\lambda}{\bar{m}}/N \cong \irred{\lambda} \cong \GVerma{\lambda}{\bar{0}}$. For any $\bar{k} \in \mathbb{N}$ we have $X_+X_-v_{\lambda}^{\bar{k}} \in \GVerma{\lambda}{\bar{m}}(\lambda)_{\bar{k}}$ by Lemma \ref{Lem:genweight} so it follows from the basis for $\GVerma{\lambda}{\bar{m}}$ given in Equation \eqref{eq:Vermabasis} and the module structure of $\mathbb{C}_{\lambda}^{\bar{m}}$ that
\begin{equation}\label{eq:+-} X_+X_-v_{\lambda}^{\bar{k}}=\sum\limits_{\bar{k}'\leq \bar{k}} \nu_{\bar{k}'}^{\bar{k}}v_{\lambda}^{\bar{k}'} \end{equation}
for some $\nu_{\bar{k}'}^{\bar{k}} \in \mathbb{C}$. Notice that $\nu_{\bar{k}}^{\bar{k}} \not = 0$ since $\nu_{\bar{k}}^{\bar{k}}v_{\lambda}^{\bar{k}}+N^{\bar{k}}=X_+X_-v_{\lambda}^{\bar{k}}+N^{\bar{k}} \not = 0$ in $\GVerma{\lambda}{\bar{k}}/N^{\bar{k}} \cong \irred{\lambda}$ where we now identify $\GVerma{\lambda}{\bar{k}}$ with the subalgebra of $\GVerma{\lambda}{\bar{m}}$ generated by $v_{\lambda}^{\bar{k}}$ and $N^{\bar{k}}$ is the unique maximal submodule of $\GVerma{\lambda}{\bar{k}}$. Define the scalars $\gamma_{\bar{k}}$ for $\bar{k} < \bar{m}$ inductively by the relation
\[ \gamma_{\bar{k}}:= \frac{1}{\nu_{\bar{k}}^{\bar{k}}} \left(\frac{\nu_{\bar{k}}^{\bar{m}}}{\nu_{\bar{m}}^{\bar{m}}} - \sum\limits_{\bar{k}< \bar{k}' < \bar{m}} \gamma_{\bar{k}'} \nu_{\bar{k}}^{\bar{k}'} \right) \qquad \Leftrightarrow \qquad   \sum\limits_{\bar{k} \leq \bar{k}' < \bar{m}} \gamma_{\bar{k}'}\nu_{\bar{k}}^{\bar{k}'}=\frac{\nu_{\bar{k}}^{\bar{m}}}{\nu_{\bar{m}}^{\bar{m}}} \]
which is well defined since $\nu_{\bar{k}}^{\bar{k}} \not = 0$ for all $\bar{k} \leq \bar{m}$. If $f:M \to \GVerma{\lambda}{\bar{m}}$ is a surjection, then there exist vectors $u_{\bar{m}} \in f^{-1}( \frac{1}{\nu_{\bar{m}}^{\bar{m}}}v_{\lambda}^{\bar{m}})$ and $u_{\bar{k}} \in f^{-1}(\gamma_{\bar{k}} v_{\lambda}^{\bar{k}})$ for $\bar{k}<\bar{m}$. Let $w=X_+X_-(u_{\bar{m}}-\sum_{\bar{k}< \bar{m}}u_{\bar{k}})$, then by Equation \eqref{eq:+-} we have
\begin{align*}
f(w)&=X_+X_-\left(\frac{1}{\nu_{\bar{m}}^{\bar{m}}} v_{\lambda}^{\bar{m}}- \sum\limits_{\bar{k}<\bar{m}} \gamma_{\bar{k}}v_{\lambda}^{\bar{k}} \right)\\
&= \sum\limits_{\bar{k} \leq \bar{m}} \frac{\nu_{\bar{k}}^{\bar{m}}}{\nu_{\bar{m}}^{\bar{m}}} v_{\lambda}^{\bar{m}} - \sum\limits_{\bar{k}<\bar{m}} \gamma_{\bar{k}} \left( \sum\limits_{\bar{k}' \leq \bar{k}} \nu_{\bar{k}'}^{\bar{k}}v_{\lambda}^{\bar{k}'}\right)\\
&=\sum\limits_{\bar{k} \leq \bar{m}} \frac{\nu_{\bar{k}}^{\bar{m}}}{\nu_{\bar{m}}^{\bar{m}}} v_{\lambda}^{\bar{m}} - \sum\limits_{\bar{k}<\bar{m}}  \left( \sum\limits_{\bar{k} \leq \bar{k}' < \bar{m}} \gamma_{\bar{k}'}\nu_{\bar{k}}^{\bar{k}'}\right)v_{\lambda}^{\bar{k}}\\
&=\sum\limits_{\bar{k} \leq \bar{m}} \frac{\nu_{\bar{k}}^{\bar{m}}}{\nu_{\bar{m}}^{\bar{m}}} v_{\lambda}^{\bar{m}} - \sum\limits_{\bar{k}<\bar{m}} \frac{\nu_{\bar{k}}^{\bar{m}}}{\nu_{\bar{m}}^{\bar{m}}}v_{\lambda}^{\bar{k}}=v_{\lambda}^{\bar{m}}.
\end{align*}
Notice that $X_jX_+=0$ in $\overline{U}_q^H(\mfg)$ and $(H_j-\lambda(H_j))^{m_j}w \not =0$ for $j=1,...,n$ since 
\[ f((H_j-\lambda(H_j))^{m_j}w)=(H_j-\lambda(H_j))^{m_j}v_{\lambda}^{\bar{m}} \not = 0 \]
so $w$ is a generalized highest weight vector of weight $\lambda$ and degree $\bar{m}$. Therefore, by the universal property of generalized Verma modules, there exists a map $g:\GVerma{\lambda}{\bar{m}} \to M$ defined by $g(v_{\lambda}^{\bar{m}})=w$ and $f \circ g=\mathrm{Id}_{\GVerma{\lambda}{\bar{m}}}$ so $f:M \to \GVerma{\lambda}{\bar{m}}$ splits and $\GVerma{\lambda}{\bar{m}}$ is projective.
\end{proof}
The statement of Lemma \ref{Lem:Vermaproj} can be strengthened to an if and only if statement, that is, $\GVerma{\lambda}{\bar{m}}$ is projective in $\mcC_{\bar{m}}$ if and only if $\lambda$ is typical. This is a trivial consequence of BGG reciprocity proved in Theorem \ref{Thm:BGG}.

Given a Hopf algebra $H$ with Hopf subalgebra $S$ and $M,N$ modules over $H$ and $S$ respectively, then
\[ (H \otimes_S N) \otimes M \cong H \otimes_S (N \otimes M|_S)\]
where $M|_S$ is $M$ viewed as an $S$-module (see \cite[Remark 2.6]{ST}). Indeed, the isomorphism is given by
\[ (h \otimes_S n) \otimes m \mapsto \sum h_{(1)} \otimes_S(n \otimes S(h_{(2)})m)\]
with inverse 
\[ h \otimes_S(n \otimes m) \mapsto \sum (h_{(1)} \otimes_S n) \otimes h_{(2)}m.\]
Applying this equivalence to $H=\UHbar{\mfg}$, $S=\overline{U}_q^H(\mfg)^+$ with $N=\mathbb{C}_{\lambda}^{\bar{m}}$, we see that
\begin{equation}\label{eq:Vermatensor} \GVerma{\lambda}{\bar{m}} \otimes M \cong (\UHbar{\mfg}\otimes_{\overline{U}_q^H(\mfg)^+} \mathbb{C}_{\lambda}^{\bar{m}}) \otimes M \cong \UHbar{\mfg}\otimes_{\overline{U}_q^H(\mfg)^+} (\mathbb{C}_{\lambda}^{\bar{m}} \otimes M|_{\overline{U}_q^H(\mfg)^+})\end{equation}
for any $M \in \mcC$. Equation \ref{eq:Vermatensor} is fundamental in establishing the following adaptation of \cite[Theorem 3.6]{H}
\begin{lemma}\label{Lem:tensorstandard}
For any $\lambda \in \mfh^*$, $\bar{m} \in \mathbb{N}^n$, and $M \in \mcC_0$, the tensor product $\GVerma{\lambda}{\bar{m}} \otimes M$ admits a degree $\bar{m}$ standard filtration 
\[ 0 \subset M_k \subset \hdots M_1 =\GVerma{\lambda}{\bar{m}} \otimes M\]
where $M_j/M_{j+1} \cong \GVerma{\lambda+\mu_j}{\bar{m}}$ with $\mu_j$ taking values in the weights of $M$ appearing $\dim M(\mu_j)$ times and $\mu_j \leq \mu_{j+1}$. 
\end{lemma}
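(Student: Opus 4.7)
The plan is to translate the tensor product into an induced module via \eqref{eq:Vermatensor} and then construct the desired filtration by inducing a suitable Borel-module filtration of $M$. Writing $\mfb$ for the positive Borel subalgebra $\overline{U}_q^H(\mfg)^+$ over which $\mbbC_\lambda^{\bar m}$ is defined, \eqref{eq:Vermatensor} gives
\[\GVerma{\lambda}{\bar m}\otimes M\;\cong\;\UHbar{\mfg}\otimes_{\mfb}\bigl(\mbbC_\lambda^{\bar m}\otimes M|_{\mfb}\bigr).\]
Since $M\in\mcC_0$, I can pick a basis $v_1,\dots,v_k$ of $M$ consisting of $\mfh$-weight vectors and order it so that the corresponding weights satisfy $\mu_1\geq\mu_2\geq\cdots\geq\mu_k$ (refining the partial order coming from $\Delta^+$, each weight $\mu$ occurring $\dim M(\mu)$ times). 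Because $X_i$ raises weights by $\alpha_i$, the subspace $\widetilde N_j:=\Span(v_1,\dots,v_j)$ is stable under each $X_i$ and under $\UHbar{\mfg}^0$; hence $0\subset\widetilde N_1\subset\cdots\subset\widetilde N_k=M|_\mfb$ is a $\mfb$-module filtration with one-dimensional quotients $\widetilde N_j/\widetilde N_{j-1}\cong\mbbC v_j$.

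Next, I would apply the functor $\UHbar{\mfg}\otimes_\mfb(\mbbC_\lambda^{\bar m}\otimes-)$ to this filtration to obtain an ascending filtration $\widetilde M_j$ of $\GVerma{\lambda}{\bar m}\otimes M$. Exactness of induction follows from the PBW basis \eqref{eq:basis} together with the triangular decomposition $\UHbar{\mfg}=\UHbar{\mfg}^-\cdot\mfb$, which exhibits $\UHbar{\mfg}$ as a free right $\mfb$-module. The successive quotients are then
\[\widetilde M_j/\widetilde M_{j-1}\;\cong\;\UHbar{\mfg}\otimes_\mfb\bigl(\mbbC_\lambda^{\bar m}\otimes\mbbC v_j\bigr),\]
and relabeling $M_j:=\widetilde M_{k-j+1}$ reverses the ordering so that the quotients appear with weights in non-decreasing order, matching the statement.

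The crux is the identification $\mbbC_\lambda^{\bar m}\otimes\mbbC v_j\cong\mbbC_{\lambda+\mu_j}^{\bar m}$ as $\mfb$-modules, after which the right-hand side above becomes $\GVerma{\lambda+\mu_j}{\bar m}$ by Definition \ref{Def:genverma}. Using the coproduct \eqref{coXi}, \eqref{coK}, \eqref{coH}, the generator $X_i$ annihilates the tensor product (it kills $\mbbC_\lambda^{\bar m}$ and acts as zero on $\mbbC v_j$ since $X_iv_j\in\widetilde N_{j-1}$), while
\[(H_i-(\lambda+\mu_j)(H_i))(v_\lambda^{\bar k}\otimes v_j)=(H_i-\lambda(H_i))v_\lambda^{\bar k}\otimes v_j\]
thanks to $v_j$ being a genuine $H_i$-eigenvector of eigenvalue $\mu_j(H_i)$; comparison with \eqref{eq:Clambda0} yields the claimed isomorphism, and the $K_i$ identities follow automatically since $K_i=q_i^{H_i}$ on both sides. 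The principal subtlety, and the only nontrivial step of the proof, is this last verification: one must check that threading the generalized weight structure through the coproduct produces exactly the Jordan pattern defining $\mbbC_{\lambda+\mu_j}^{\bar m}$, which works precisely because the $H_i$-action on $v_j$ is semisimple and contributes only the scalar shift $\mu_j(H_i)$ without disturbing the nilpotent part supplied by $\mbbC_\lambda^{\bar m}$.
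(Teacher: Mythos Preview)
Your proposal is correct and follows essentially the same approach as the paper: both arguments filter $M|_{\mfb}$ by spans of weight vectors ordered compatibly with the root order, tensor with $\mbbC_\lambda^{\bar m}$, identify the one-dimensional subquotients with $\mbbC_{\lambda+\mu_j}^{\bar m}$, and then induce. The only cosmetic differences are that the paper builds a descending filtration directly (avoiding your relabeling step) and justifies exactness of induction by appealing to flatness of pointed Hopf algebras over Hopf subalgebras rather than freeness via the PBW decomposition; your freeness argument is in fact the more elementary of the two.
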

\begin{proof}
Let $v_1,...,v_k$ denote an ordered weight basis for $M|_{\overline{U}_q^H(\mfg)^+}$ with weights $\mu_1,...,\mu_k$ and degrees $d_1,...,d_k$ respectively (allowing for repetition) where $i \leq j$ when $\mu_i \leq \mu_j$. For each $1 \leq j \leq k$ let $N_j$ denote the $\overline{U}^H_q(\mfg)^+$-submodule of $N:=\mathbb{C}_{\lambda}^{\bar{m}} \otimes M|_{\overline{U}_q^H(\mfb)}$ spanned by $\{\mathbb{C}_{\lambda}^{\bar{m}} \otimes v_s \, | \, j \leq s \leq k\}$. Then it is clear that we have a filtration 
\[ 0 \subset  N_k \subset N_{k-1} \subset \cdots N_1=N. \]
Since $X_sv_j=0$ for all $s=1,...,n$, it is easy to see that $\mathbb{C}_{\lambda}^{\bar{m}} \otimes v_j \cong \mathbb{C}_{\lambda+\mu_j}^{\bar{m}}$ as $\overline{U}_q^H(\mfg)^+$-modules by the proof of Proposition \ref{Prop:weighttensor} (here we require $M \in \mcC_0$) so we have $N_j/N_{j+1} \cong \mathbb{C}_{\lambda+\mu_j}^{\bar{m}}$. Since the functor
\[ \UHbar{\mfg}\otimes_{\overline{U}_q^H(\mfb)} (\mathbb{C}_{\lambda}^{\bar{m}} \otimes  (-))\]
is exact as it is a composition of exact functors (notice that $\UHbar{\mfg}$ is pointed, hence flat over its Hopf subalgebras), it follows that this filtration induces to a filtration
\[ M_j:=\UHbar{\mfg}\otimes_{\overline{U}_q^H(\mfb)} (\mathbb{C}_{\lambda}^{\bar{m}} \otimes N_j)\]
of $\UHbar{\mfg}\otimes_{\overline{U}_q^H(\mfb)} N \cong \GVerma{\lambda}{\bar{m}} \otimes M$ (by Equation \eqref{eq:Vermatensor}) satisfying $M_j/M_{j+1} \cong \GVerma{\lambda+\mu_j}{\bar{m}}$.
\end{proof}
We require one additional lemma before we can establish results on projective covers.
\begin{lemma}\label{Lem:tensorproj}
Let $P \in \mcC_{\bar{m}}$ be projective. Then for any $M \in \mcC_{\bar{0}}$, $P \otimes M$ and $M \otimes P$ are projective in $\mcC_{\bar{m}}$.
\end{lemma}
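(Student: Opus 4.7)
The plan is to realize that tensor–hom adjunction for the Hopf algebra $\UHbar{\mfg}$ restricts well to $\mcC_{\bar{m}}$, and to exhibit $\Hom_{\mcC_{\bar{m}}}(P\otimes M,-)$ (respectively $\Hom_{\mcC_{\bar{m}}}(M\otimes P,-)$) as a composition of exact functors. The key preparatory observation is that tensoring with any object of $\mcC_{\bar{0}}$ preserves $\mcC_{\bar{m}}$: indeed, by Proposition \ref{Prop:weighttensor}, for $N\in\mcC_{\bar{m}}$ and $L\in\mcC_{\bar 0}$ we have $N\otimes L,\,L\otimes N\in\mcC_{\bar{m}+\bar 0}=\mcC_{\bar{m}}$. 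Consequently $P\otimes M$ and $M\otimes P$ lie in $\mcC_{\bar{m}}$ to begin with.

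Next I would set up the duals. Let $M^{\ast}$ denote the right dual of $M$ (the vector space $\Hom_{\mbbC}(M,\mbbC)$ with $\UHbar{\mfg}$-action through $S$), and let ${}^{\ast}M$ denote the left dual (same space, action through $S^{-1}$, which exists because the antipode of $\UHbar{\mfg}$ is invertible). A direct computation using $S(H_j)=-H_j$ and the formula \eqref{eq:HX} shows that if $L\in\mcC_{\bar k}$ then $L^{\ast},\,{}^{\ast}L\in\mcC_{\bar k}$ as well (the weights are negated but the degrees are unchanged, since $(H_j-\lambda(H_j))^{m_j}$ acts as $\pm(H_j+\lambda(H_j))^{m_j}$ on the dual). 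In particular $M^{\ast},{}^{\ast}M\in\mcC_{\bar 0}$.

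The rigidity of the monoidal category of finite-dimensional $\UHbar{\mfg}$-modules then supplies natural isomorphisms
\[
\Hom_{\mcC_{\bar m}}(P\otimes M,\,N)\;\cong\;\Hom_{\mcC_{\bar m}}(P,\,N\otimes M^{\ast}),\qquad
\Hom_{\mcC_{\bar m}}(M\otimes P,\,N)\;\cong\;\Hom_{\mcC_{\bar m}}(P,\,{}^{\ast}M\otimes N),
\]
for every $N\in\mcC_{\bar m}$; both sides make sense inside $\mcC_{\bar m}$ by the previous paragraph. Since tensoring over $\mbbC$ with a fixed finite-dimensional space is exact, the functors $N\mapsto N\otimes M^{\ast}$ and $N\mapsto{}^{\ast}M\otimes N$ are exact endofunctors of $\mcC_{\bar m}$, and $\Hom_{\mcC_{\bar m}}(P,-)$ is exact because $P$ is projective in $\mcC_{\bar m}$. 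Thus the right-hand sides are exact in $N$, hence so are the left-hand sides, proving that $P\otimes M$ and $M\otimes P$ are projective in $\mcC_{\bar m}$.

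The only real subtlety I anticipate is bookkeeping for the adjunction, namely verifying that the unit/counit maps $\mbbC\to M\otimes M^{\ast}$ etc.\ are $\UHbar{\mfg}$-linear and land in the correct subcategory; this is where invertibility of $S$ is used for $M\otimes P$ and where the degree bound $\bar m+\bar 0=\bar m$ is needed to keep the adjunction internal to $\mcC_{\bar m}$ rather than just $\mcC$. Once this is in place, the argument is completely formal.
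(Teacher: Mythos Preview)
Your argument is correct and is essentially the same as the paper's: both use the tensor--hom adjunction to write $\Hom_{\mcC_{\bar m}}(P\otimes M,-)$ as the composition of the exact functor $(-)\otimes M^{\ast}$ with the exact functor $\Hom_{\mcC_{\bar m}}(P,-)$, after noting that tensoring with an object of $\mcC_{\bar 0}$ preserves $\mcC_{\bar m}$. Your treatment is in fact slightly more careful, since you explicitly invoke the left dual ${}^{\ast}M$ for the $M\otimes P$ case and verify that duals of weight modules remain in $\mcC_{\bar 0}$, points the paper leaves implicit.
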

The lemma follows in the usual way from the fact that
\[ \mathrm{Hom}_{\mcC_{\bar{m}}}(P \otimes M,-) \cong \mathrm{Hom}_{\mcC_{\bar{m}}}(P, (- \otimes M^*)) \cong \mathrm{Hom}_{\mcC_{\bar{m}}}(P,-) \circ (- \otimes M^*)\]
is a composition of exact functors, where we need only observe that $P \otimes M \in \mcC_{\bar{m}}$ by Lemma \ref{Lem:genweight}.

We now have the tools required to prove the existence of projective covers. 
\begin{theorem}\label{Thm:projexist}
\begin{itemize}
\item Projective modules do not exist in $\mcC$.
\item For any $\bar{m} \in \mathbb{N}^n$ and $\lambda \in \mfh^*$, $\irred{\lambda}$ admits a projective cover in $\mcC_{\bar{m}}$.
\end{itemize}
\end{theorem}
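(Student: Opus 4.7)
The plan is to establish part (2) first and obtain (1) by a contradiction that appeals to (2). For (2), the strategy is to show that $\mcC_{\bar m}$ has enough projectives; the existence of projective covers then follows from the standard Krull--Schmidt-style argument, valid here because every object has finite length: one decomposes any projective surjecting onto $\irred{\lambda}$ into indecomposable summands and keeps the unique one whose image is still all of $\irred{\lambda}$.

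To produce enough projectives I would treat each simple $\irred{\lambda}$ separately. For typical $\lambda$, the generalized Verma $\GVerma{\lambda}{\bar m}$ is already projective by Lemma \ref{Lem:Vermaproj} and surjects onto $\irred{\lambda}$. For atypical $\lambda$ I apply a tensor-shift construction: choose a typical $\nu \in \mfh^*$ and a finite-dimensional $L \in \mcC_{\bar 0}$ (for example an irreducible highest-weight module $L(\tau)$) whose lowest weight $\mu_{\min}$ satisfies $\nu + \mu_{\min} = \lambda$. Then $\GVerma{\nu}{\bar m}$ is projective by Lemma \ref{Lem:Vermaproj}, $\GVerma{\nu}{\bar m} \otimes L$ remains projective by Lemma \ref{Lem:tensorproj}, and Lemma \ref{Lem:tensorstandard} furnishes a degree $\bar m$ standard filtration whose top quotient is $\GVerma{\nu + \mu_{\min}}{\bar m} = \GVerma{\lambda}{\bar m}$, which then surjects onto $\irred{\lambda}$.

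For (1), suppose for contradiction that $0 \ne P \in \mcC$ is projective and fix $\bar m \in \mathbb{N}^n$ with $P \in \mcC_{\bar m}$. Since $P$ is finite-dimensional it admits a simple quotient $\pi : P \twoheadrightarrow \irred{\lambda}$. By (2), $\irred{\lambda}$ has a projective cover $Q := \Proj{\lambda}{\bar m + \bar 1}$ in $\mcC_{\bar m + \bar 1}$, and the BGG multiplicity $(\Proj{\lambda}{\bar m + \bar 1}, \GVerma{\lambda}{\bar m + \bar 1}) = [\GVerma{\lambda}{\bar 0}, \irred{\lambda}] = 1$ (Theorem \ref{Thm:BGG} combined with Lemma \ref{Prop:projfilt}) shows that $Q$ contains a weight-$\lambda$ generalized weight vector of degree $\bar m + \bar 1$. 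Projectivity of $P$ in $\mcC$, and hence in $\mcC_{\bar m + \bar 1}$, lifts $\pi$ through the cover map to a morphism $\psi : P \to Q$, and essentiality of the projective cover forces $\psi$ to be surjective. But then $Q = \psi(P)$ would be a quotient of $P \in \mcC_{\bar m}$, hence $Q \in \mcC_{\bar m}$, contradicting the existence of a degree $\bar m + \bar 1$ vector in $Q$.

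The hardest step I anticipate is the typicality argument in (2): for each atypical $\lambda$ one must exhibit a typical $\nu$ and a finite-dimensional $L \in \mcC_{\bar 0}$ whose lowest weight equals $\lambda - \nu$. At generic $\ell$ this is routine because typicality is a Zariski-open condition on $\mfh^*$, but at small orders $\ell$ the Hopf-algebra constraint $K_j = q_j^{H_j}$ restricts the weight ranges of finite-dimensional modules in $\mcC_{\bar 0}$, so reaching every atypical $\lambda$ may require iterating tensor products with several $L(\tau)$'s or working with more general indecomposable objects of $\mcC_{\bar 0}$. A secondary concern is that the argument for (1) invokes BGG reciprocity and the standard-filtration lemma, both proved later in the paper; this can be reconciled by establishing (2), Lemma \ref{Prop:projfilt}, and Theorem \ref{Thm:BGG} jointly before returning to (1), or alternatively by replacing the use of $\Proj{\lambda}{\bar m + \bar 1}$ by the explicit projective $\GVerma{\nu}{\bar m + \bar 1} \otimes L$ built in (2), at the cost of extra work to recover a surjective lift.
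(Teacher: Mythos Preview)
Your argument for part (2) is essentially the paper's: pick a typical weight, tensor the corresponding generalized Verma with a suitable finite-dimensional module in $\mcC_{\bar 0}$, and extract the projective cover as an indecomposable summand via Krull--Schmidt. The paper makes the choice explicit by taking $L = \irred{-\mu}^*$, whose lowest weight is $\mu$, and picking $\mu$ so that $\lambda-\mu$ is typical; since typicality is Zariski-open in $\mfh^*$ and $\mu$ ranges over all of $\mfh^*$, your ``hardest step'' dissolves and no iteration of tensor products is needed.

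For part (1) your route is correct but needlessly indirect. You lift $\pi:P\twoheadrightarrow \irred{\lambda}$ through the projective cover $\Proj{\lambda}{\bar m+\bar 1}$ and then invoke BGG reciprocity to know that $\Proj{\lambda}{\bar m+\bar 1}$ genuinely contains a degree $\bar m+\bar 1$ vector, which creates the circularity you flag. The paper bypasses all of this by lifting through the generalized Verma module $\GVerma{\lambda}{\bar m'}$ (with $\bar m' > \bar m$) instead: the quotient map $\GVerma{\lambda}{\bar m'}\twoheadrightarrow \irred{\lambda}$ is already essential because $\GVerma{\lambda}{\bar m'}$ has a unique maximal submodule, so the lift $h:P\to\GVerma{\lambda}{\bar m'}$ is forced to be surjective, and the generator $v_\lambda^{\bar m'}$ is visibly of degree $\bar m'$. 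This requires neither part (2), nor Lemma \ref{Prop:projfilt}, nor Theorem \ref{Thm:BGG}, and in fact the paper proves (1) before (2). Your alternative of replacing $\Proj{\lambda}{\bar m+\bar 1}$ by the explicit projective $\GVerma{\nu}{\bar m+\bar 1}\otimes L$ would work too, but the Verma module alone already suffices since only essentiality of the surjection onto $\irred{\lambda}$ is used, not projectivity.
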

\begin{proof}
Suppose $P \in \mcC$ is projective, let $\mathrm{Rad}(P)$ denote the union of its maximal submodules, and $\irred{\lambda}$ any irreducible factor of $P/\mathrm{Rad}(P)$. Then $P$ admits a surjection onto $\irred{\lambda}$. Since $P$ is finite-dimensional, its weight spaces have finite degree so there exists some $\bar{m} \in \mathbb{N}^n$ such that $P \in \mcC_{\bar{m}}$. Let $\bar{m}'>\bar{m}$, then there exists a surjection $f:\GVerma{\lambda}{\bar{m}'} \to \irred{\lambda}$. Since $P$ is projective, there is a map $h:P \to \GVerma{\lambda}{\bar{m}'}$ such that the following diagram commutes:\\
\vspace{-0.5cm}
\begin{center}
\begin{tikzcd}
\GVerma{\lambda}{\bar{m}'}\arrow[twoheadrightarrow]{dr}{f} & \\
P\arrow{u}{h} \arrow[twoheadrightarrow]{r}{p}&  \irred{\lambda}
\end{tikzcd}
\end{center}
It follows from the definition of $f:\GVerma{\lambda}{\bar{m}'} \to \irred{\lambda}$ that $h:P \to \GVerma{\lambda}{\bar{m}'}$ must be surjective, else $f \circ h=0$, so there exists some $u \in P$ such that $h(u)=v_{\lambda}^{\bar{m}'}$ where $v_{\lambda}^{\bar{m}'}\in \GVerma{\lambda}{\bar{m}'}(\lambda)_{\bar{m}'}$ is non-zero. Notice now that 
\[ h((H_j-\lambda(H_j))^{\bar{m}'}u)=(H_j-\lambda(H_j))^{\bar{m}'}h(u)=(H_j-\lambda(H_j))^{\bar{m}'} v_{\lambda}^{\bar{m}'} \not = 0\]
so $(H_j-\lambda(H_j))^{\bar{m}'}u\not =0$ for $j=1,...,n$, contradicting $P \in \mcC_{\bar{m}}$. Therefore, a projective module cannot exist in $\mcC$.

By Lemma \ref{Lem:Vermaproj}, $\GVerma{\mu}{\bar{m}} \in \mcC_{\bar{m}}$ is projective for typical $\mu$. Hence, for any $\lambda \in \mfh^*$, choose $\mu \in \mfh^*$ such that $\lambda-\mu$ is typical. Then the module
\[ \GVerma{\lambda-\mu}{\bar{m}}\otimes \irred{-\mu}^*\]
is projective by Lemmas \ref{Lem:Vermaproj} \& \ref{Lem:tensorproj}. The lowest weight of $\irred{-\mu}^*$ is $\mu$ so $\GVerma{\lambda}{\bar{m}}$ is quotient of $\GVerma{\lambda-\mu}{\bar{m}}\otimes \irred{-\mu}^*$ by Lemma \ref{Lem:tensorstandard}. Hence, $\irred{\lambda}$ is a quotient of $\GVerma{\lambda-\mu}{\bar{m}}\otimes \irred{-\mu}^*$ and since $\GVerma{\lambda-\mu}{\bar{m}}\otimes \irred{-\mu}^*$ is finite-dimensional, it follows from \cite[Lemma 3.6]{Kr} that its indecomposable summand which surjects onto $\irred{\lambda}$ is the projective cover of $\irred{\lambda}$. 
\end{proof}
We will now denote by $\Proj{\lambda}{\bar{m}}$ the projective cover of $\irred{\lambda}$ in $\mcC_{\bar{m}}$. Notice that if $\bar{k} < \bar{m}$, then $\Proj{\lambda}{\bar{k}} \in \mcC_{\bar{m}}$, although it is not projective in $\mcC_{\bar{m}}$. We have the following:
\begin{proposition}
Let $\lambda \in \mfh^*$ and $\bar{k}<\bar{m}$ both in $\mathbb{N}^n$. Then $\Proj{\lambda}{\bar{m}}$ is the projective cover of $\Proj{\lambda}{\bar{k}}$ viewed as objects in $\mcC_{\bar{m}}$. 
\end{proposition}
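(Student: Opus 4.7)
The plan is to exhibit an essential epimorphism $\Proj{\lambda}{\bar{m}} \twoheadrightarrow \Proj{\lambda}{\bar{k}}$ in $\mcC_{\bar{m}}$ and then invoke the fact that $\Proj{\lambda}{\bar{m}}$ is projective in $\mcC_{\bar{m}}$ by Theorem \ref{Thm:projexist}. The key observation throughout is that $\mcC_{\bar{k}}$ is closed under subobjects and quotients inside $\mcC_{\bar{m}}$, because the defining condition on the degrees of generalized weight spaces passes to both.

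First I would use projectivity: let $p_{\bar{m}}\colon \Proj{\lambda}{\bar{m}} \twoheadrightarrow \irred{\lambda}$ and $p_{\bar{k}}\colon \Proj{\lambda}{\bar{k}} \twoheadrightarrow \irred{\lambda}$ be the two projective cover maps. Since $\Proj{\lambda}{\bar{k}} \in \mcC_{\bar{k}} \subset \mcC_{\bar{m}}$ and $\Proj{\lambda}{\bar{m}}$ is projective in $\mcC_{\bar{m}}$, there exists a lift $f\colon \Proj{\lambda}{\bar{m}} \to \Proj{\lambda}{\bar{k}}$ with $p_{\bar{k}}\circ f = p_{\bar{m}}$.

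Next I would verify that $f$ is surjective. The image $f(\Proj{\lambda}{\bar{m}})$ is a submodule of $\Proj{\lambda}{\bar{k}}$ and its image under $p_{\bar{k}}$ equals $p_{\bar{m}}(\Proj{\lambda}{\bar{m}})=\irred{\lambda}$. Since $p_{\bar{k}}$ is an essential epimorphism (it is the projective cover map in $\mcC_{\bar{k}}$, and this property is intrinsic, so still holds when we regard it in $\mcC_{\bar{m}}$), no proper submodule of $\Proj{\lambda}{\bar{k}}$ surjects onto $\irred{\lambda}$; thus $f$ is surjective. To see that $f$ is itself essential, suppose $N \subset \Proj{\lambda}{\bar{m}}$ is a subobject with $f(N)=\Proj{\lambda}{\bar{k}}$. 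Then $p_{\bar{m}}(N)=p_{\bar{k}}(f(N))=\irred{\lambda}$, and essentiality of $p_{\bar{m}}$ forces $N=\Proj{\lambda}{\bar{m}}$.

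Combining these, $f$ is an essential epimorphism from a projective object in $\mcC_{\bar{m}}$ onto $\Proj{\lambda}{\bar{k}}$, which is the definition of a projective cover. There is no serious obstacle here; the only point requiring care is checking that the essentiality of $p_{\bar{k}}$ in $\mcC_{\bar{k}}$ survives the embedding $\mcC_{\bar{k}} \hookrightarrow \mcC_{\bar{m}}$, which reduces to the stability of $\mcC_{\bar{k}}$ under taking submodules noted above.
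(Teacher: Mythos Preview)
Your proof is correct and follows essentially the same argument as the paper: lift the cover map $p_{\bar{m}}$ through $p_{\bar{k}}$ using projectivity of $\Proj{\lambda}{\bar{m}}$ in $\mcC_{\bar{m}}$, then use essentiality of $p_{\bar{k}}$ to get surjectivity of $f$ and essentiality of $p_{\bar{m}}$ to get essentiality of $f$. Your explicit remark that essentiality of $p_{\bar{k}}$ persists in $\mcC_{\bar{m}}$ because $\mcC_{\bar{k}}$ is closed under subobjects is a helpful clarification the paper leaves implicit.
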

\begin{proof}
Both $\Proj{\lambda}{\bar{m}}$ and $\Proj{\lambda}{\bar{k}}$ admit essential surjections
\[ \varphi:\Proj{\lambda}{\bar{m}} \to \irred{\lambda}, \qquad \phi:\Proj{\lambda}{\bar{k}} \to \irred{\lambda}.\]
Since $\Proj{\lambda}{\bar{m}}$ is projective in $\mcC_{\bar{m}}$, and $\irred{\lambda},\Proj{\lambda}{\bar{k}} \in \mcC_{\bar{m}}$, there exists a morphism $f:\Proj{\lambda}{\bar{m}} \to \Proj{\lambda}{\bar{k}}$ satisfying $\phi \circ f=\varphi$. Notice that if $f$ is not surjective, $\mathrm{Im}(f)$ would be a proper submodule such that $\varphi(\Proj{\lambda}{\bar{k}})=\irred{\lambda}=\phi(\mathrm{Im}(f))$ contradicting the fact that $\phi:\Proj{\lambda}{\bar{k}} \to \irred{\lambda}$ is essential. Therefore, $f:\Proj{\lambda}{\bar{m}} \to \Proj{\lambda}{\bar{k}}$ is surjective and we need only show it is essential. Suppose that $\Proj{\lambda}{\bar{m}}$ admits a proper submodule $N$ such that $f(N)=\Proj{\lambda}{\bar{k}}$. It then follows that 
\[ \varphi(N)=\phi(f(N))=\phi(P_{\lambda}^{\bar{k}})=\irred{\lambda},\]
which contradicts the fact that $\varphi$ is essential. Hence no such submodule exists, so $f:\Proj{\lambda}{\bar{m}} \to \Proj{\lambda}{\bar{k}}$ is essential.
\end{proof}
In the proof of Theorem \ref{Thm:projexist}, $\Proj{\lambda}{\bar{m}}$ is realized as an indecomposable summand of a projective module of the form $\GVerma{\lambda-\mu}{\bar{m}}\otimes \irred{-\mu}^*$. An argument identical to \cite[Proposition 3.7]{H} shows that summands of modules with degree $\bar{m}$ standard filtrations admit degree $\bar{m}$ standard filtrations themselves so we have the following consequence of Lemma \ref{Lem:tensorstandard} since all projectives are direct sums of projective covers.
\begin{lemma}\label{Prop:projfilt}
Any projective object $P \in \mcC_{\bar{m}}$ admits a degree $\bar{m}$ standard filtration.
\end{lemma}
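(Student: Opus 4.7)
The plan is to reduce the statement to the indecomposable projectives $\Proj{\lambda}{\bar{m}}$ and then invoke a general stability property of standard filtrations under direct summands. Since $\mcC_{\bar{m}}$ has enough projectives by Theorem \ref{Thm:projexist} and every object has finite length, the Krull--Schmidt theorem lets me decompose an arbitrary projective $P \in \mcC_{\bar{m}}$ into a finite direct sum of indecomposable projectives, each of which must be a projective cover $\Proj{\lambda}{\bar{m}}$ for some $\lambda \in \mfh^*$. Concatenating filtrations, it therefore suffices to treat $P = \Proj{\lambda}{\bar{m}}$.

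For the indecomposable case I would use the construction from Theorem \ref{Thm:projexist}: pick $\mu \in \mfh^*$ with $\lambda - \mu$ typical and realise $\Proj{\lambda}{\bar{m}}$ as an indecomposable summand of $Q := \GVerma{\lambda - \mu}{\bar{m}} \otimes \irred{-\mu}^*$. By Lemma \ref{Lem:tensorstandard}, $Q$ itself carries a degree $\bar{m}$ standard filtration, so the remaining task is to show that \emph{any} direct summand of a module with a degree $\bar{m}$ standard filtration inherits such a filtration.

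This last step is the main technical point, and I would argue it by induction on the length $k$ of the standard filtration of $Q$, following the strategy used by Humphreys in Proposition 3.7 of his Category $\mcO$ book. Write $Q = N \oplus N'$ and choose a weight $\nu$ maximal among the weights of $N$, so that no $\nu + \alpha$ with $\alpha \in \Delta^+$ occurs in $N$. By maximality and Lemma \ref{Lem:genweight}, every vector of weight $\nu$ in $N$ is a generalized highest weight vector. Comparing the generalized weight decomposition of $Q$ with the description of the successive quotients $\GVerma{\lambda - \mu + \mu_j}{\bar{m}}$ from Lemma \ref{Lem:tensorstandard}, I can locate such a highest weight vector $v \in N$ of \emph{full} degree $\bar{m}$. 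The universal property of generalized Verma modules from Definition \ref{Def:genverma} then provides a map $\GVerma{\nu}{\bar{m}} \to N$; injectivity follows by comparing weight-space dimensions with the known filtration on $Q$ and using that $\nu$ is maximal. Quotienting by the image, $Q/\GVerma{\nu}{\bar{m}} \cong (N / \GVerma{\nu}{\bar{m}}) \oplus N'$ inherits a degree $\bar{m}$ standard filtration of length $k - 1$, and the induction hypothesis applied to the summand $N / \GVerma{\nu}{\bar{m}}$ gives the required filtration on $N$, which lifts to one on $N$ itself.

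The main obstacle is verifying the two subtle points in the inductive step: that a highest weight vector of weight $\nu$ can always be chosen with \emph{full} degree $\bar{m}$ (rather than some strictly smaller degree, which would generate $\GVerma{\nu}{\bar{k}}$ with $\bar{k} < \bar{m}$ instead), and that the resulting map from $\GVerma{\nu}{\bar{m}}$ into $N$ is injective. Both rely on a careful tally of generalized weight-space dimensions, using Proposition \ref{Prop:weighttensor} and the explicit structure of $\mathbb{C}_\lambda^{\bar{m}}$, to match the contribution to $N(\nu)_{\bar{m}}$ with the appearance of a corresponding $\GVerma{\nu}{\bar{m}}$-subquotient in the given filtration of $Q$.
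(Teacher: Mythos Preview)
Your approach matches the paper's: it too realises each $\Proj{\lambda}{\bar{m}}$ as a summand of $\GVerma{\lambda-\mu}{\bar{m}}\otimes \irred{-\mu}^*$ (which carries a degree-$\bar{m}$ standard filtration by Lemma \ref{Lem:tensorstandard}) and then invokes the argument of \cite[Proposition 3.7]{H} to pass to direct summands. The paper does not spell out the adaptation you sketch---it simply asserts that Humphreys' argument carries over ``identically''---so the subtleties you flag (finding a highest-weight vector of full degree $\bar{m}$ and injectivity of the resulting map) are left implicit there as well.
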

We have seen that for each $\bar{m} \in \mbbN^n$, $\mcC_{\bar{m}}$ has enough projectives. We now demonstrate that a variant of BGG reciprocity holds in these categories as well. Recall that for any $M \in \mcC$, $[M:\irred{\lambda}]$ denotes the multiplicity of $\irred{\lambda}$ in the Jordan-Holder series of $M$. Similarly, if $M$ admits a degree $\bar{m}$ standard filtration, we denote the multiplicity of $\GVerma{\lambda}{\bar{m}}$ in $M$ by $(M,\GVerma{\lambda}{\bar{m}})$. Then we have the following variant of BGG reciprocity:
\begin{theorem}[BGG Reciprocity] \label{Thm:BGG}
For all $\lambda,\mu \in \mfh^*$ and $\bar{m} \in \mathbb{N}^n$ we have
\[ (\Proj{\lambda}{\bar{m}},\GVerma{\mu}{\bar{m}})=[\Verma{\mu},\irred{\lambda}].\]
\end{theorem}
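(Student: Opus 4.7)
The plan is to compute $\dim \Hom_{\mcC_{\bar{m}}}(\Proj{\lambda}{\bar{m}}, \DGVerma{\mu}{\bar{m}})$ in two different ways, following the strategy used to prove BGG reciprocity for classical category~$\mathcal{O}$. A factor $c := \prod_{j=1}^n (m_j+1)$ will appear on both sides and cancel; this cancellation is precisely what turns $\GVerma{\mu}{\bar{m}}$ on the left of the claimed identity into the weight Verma $\GVerma{\mu}{\bar{0}}$ on the right.

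The two homological inputs needed are
\[ \Ext^1_{\mcC_{\bar{m}}}(\GVerma{\nu}{\bar{m}}, \DGVerma{\mu}{\bar{m}}) = 0 \quad \text{and} \quad \dim \Hom_{\mcC_{\bar{m}}}(\GVerma{\nu}{\bar{m}}, \DGVerma{\mu}{\bar{m}}) = \delta_{\nu\mu} \cdot c \]
for all $\nu, \mu \in \mfh^*$. For the $\Hom$ computation, a morphism is determined by the image $f$ of $v_\nu^{\bar{m}}$, which must satisfy $X_j f = 0$ for all $j$ (the generalized-degree relation $(H_j - \nu(H_j))^{m_j+1} f = 0$ being automatic in $\mcC_{\bar{m}}$). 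Dualizing the action on $\DGVerma{\mu}{\bar{m}}$ (note $(S\circ\omega)(X_j) = -K_{\alpha_j}X_{-j}$) shows that $X^+$-invariants correspond to functionals on $\GVerma{\mu}{\bar{m}}$ vanishing on $\sum_j X_{-j} \GVerma{\mu}{\bar{m}}$; since $\GVerma{\mu}{\bar{m}}$ is free of rank $c$ over $\UHbar{\mfg}^-$ on $\{v_\mu^{\bar{k}}\}_{\bar{k} \le \bar{m}}$, this quotient is exactly $c$-dimensional and concentrated in weight $\mu$.

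Combining these inputs with Lemma~\ref{Prop:projfilt} (every projective in $\mcC_{\bar{m}}$ admits a degree $\bar{m}$ standard filtration) via induction on filtration length gives $\dim \Hom(\Proj{\lambda}{\bar{m}}, \DGVerma{\mu}{\bar{m}}) = c \cdot (\Proj{\lambda}{\bar{m}}, \GVerma{\mu}{\bar{m}})$. Separately, since $\Proj{\lambda}{\bar{m}}$ is the projective cover of $\irred{\lambda}$ with $\End(\irred{\lambda}) = \mathbb{C}$, the same $\Hom$ space has dimension $[\DGVerma{\mu}{\bar{m}}:\irred{\lambda}] = [\GVerma{\mu}{\bar{m}}:\irred{\lambda}]$, using $\check{\irred{\lambda}} \cong \irred{\lambda}$ from Lemma~\ref{Lem:duality}. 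The last step is to show $[\GVerma{\mu}{\bar{m}}:\irred{\lambda}] = c \cdot [\GVerma{\mu}{\bar{0}}:\irred{\lambda}]$: picking a linear extension $\bar{k}_1 = \bar{0}, \bar{k}_2, \ldots, \bar{k}_c$ of $\le$ on $\{\bar{k} \le \bar{m}\}$ and letting $N_i \subset \GVerma{\mu}{\bar{m}}$ be the submodule generated by $v_\mu^{\bar{k}_1}, \ldots, v_\mu^{\bar{k}_i}$, the image of $v_\mu^{\bar{k}_i}$ in $N_i/N_{i-1}$ is annihilated by every $H_j - \mu(H_j)$ (because $\bar{k}_i - \mathbf{e}_j$ precedes $\bar{k}_i$), so $N_i/N_{i-1}$ is a quotient of $\GVerma{\mu}{\bar{0}}$, and a dimension count forces equality. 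Cancelling $c$ on both sides yields the theorem.

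The main obstacle is the Ext-vanishing. For typical $\nu$, $\GVerma{\nu}{\bar{m}}$ is projective by Lemma~\ref{Lem:Vermaproj} and the vanishing is immediate. For atypical $\nu$, I expect to use a translation-functor reduction: for some typical $\eta$ and a suitable finite-dimensional weight module $V \in \mcC_{\bar{0}}$, the tensor product $\GVerma{\eta}{\bar{m}} \otimes V$ is projective by Lemma~\ref{Lem:tensorproj} and admits a standard filtration by Lemma~\ref{Lem:tensorstandard} containing $\GVerma{\nu}{\bar{m}}$ among its quotients; a long exact sequence chase then transfers the Ext-vanishing from the projective module to $\GVerma{\nu}{\bar{m}}$.
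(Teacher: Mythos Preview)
Your strategy is sound and would go through, but the paper takes a cleaner route by choosing a different test object: it pairs against the \emph{degree-zero} dual Verma $\DVerma{\mu}=\DGVerma{\mu}{\bar 0}$ rather than $\DGVerma{\mu}{\bar m}$. With that choice one has $\dim\Hom(\GVerma{\nu}{\bar m},\DVerma{\mu})=\delta_{\nu\mu}$ on the nose (the space of $X^+$-invariants in $\DVerma{\mu}$ is the single line dual to $v_\mu^{\bar 0}$), so the induction on filtration length yields $(\Proj{\lambda}{\bar m},\GVerma{\mu}{\bar m})=\dim\Hom(\Proj{\lambda}{\bar m},\DVerma{\mu})=[\DVerma{\mu}:\irred{\lambda}]=[\Verma{\mu}:\irred{\lambda}]$ with no factor $c$ and no cancellation. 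Your filtration of $\GVerma{\mu}{\bar m}$ by $c$ copies of $\Verma{\mu}$ is correct and is in fact what the paper exploits on the \emph{other} side of the argument: it observes that $\GVerma{\nu}{\bar m}$ admits a degree-$\bar 0$ standard filtration and then invokes Lemma~\ref{Lem:Ext} at degree $\bar 0$ to get $\Ext^1(\GVerma{\nu}{\bar m},\DVerma{\mu})=0$. So the two proofs use the same ingredients, but the paper's choice of test object absorbs your final step into the setup.

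Your translation-functor sketch for $\Ext^1(\GVerma{\nu}{\bar m},\DGVerma{\mu}{\bar m})=0$, however, does not close as written. Realising $\GVerma{\nu}{\bar m}$ as a subquotient of a projective $P=\GVerma{\eta}{\bar m}\otimes V$ with standard-filtered complement gives a long exact sequence, but passing $\Ext^1$-vanishing from $P$ to $\GVerma{\nu}{\bar m}$ then requires either surjectivity of the restriction $\Hom(P,-)\to\Hom(K,-)$ (if $\GVerma{\nu}{\bar m}$ is the quotient) or vanishing of $\Ext^2$ of the cokernel (if it is the submodule), and neither is available without an induction that already presupposes the result. You should instead cite Lemma~\ref{Lem:Ext} directly (both $\GVerma{\nu}{\bar m}$ and $\GVerma{\mu}{\bar m}$ have degree-$\bar m$ standard filtrations trivially), or run the usual splitting argument: in any extension $0\to\DGVerma{\mu}{\bar m}\to E\to\GVerma{\nu}{\bar m}\to 0$ inside $\mcC_{\bar m}$, a lift $\tilde v$ of $v_\nu^{\bar m}$ automatically satisfies $(H_j-\nu(H_j))^{m_j+1}\tilde v=0$, and since all weights of both $\GVerma{\nu}{\bar m}$ and $\DGVerma{\mu}{\bar m}$ are bounded above by $\nu$ (after swapping $\nu,\mu$ via duality when $\nu<\mu$), one gets $X_j\tilde v=0$ and the universal property splits the sequence.
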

\begin{proof}
The projective cover $\Proj{\lambda}{\bar{m}} \in \mcC_{\bar{m}}$ of $\irred{\lambda}$ has a degree $\bar{m}$ standard filtration by Proposition \ref{Prop:projfilt}. Therefore, if for any $M \in \mcC$ we have
\begin{enumerate}
\item[$(1)$] $\dim \Hom(P_{\lambda}^{\bar{m}},M)=[M,\irred{\lambda}]$, and
\item[$(2)$] if $M$ admits a degree $\bar{m}$ standard filtration, $(M,\GVerma{\lambda}{\bar{m}})=\dim \Hom (M,\DVerma{\lambda})$.
\end{enumerate}
Then,
\[ (\Proj{\lambda}{\bar{m}},\GVerma{\mu}{\bar{m}})=\dim \Hom(\Proj{\lambda}{\bar{m}},\DVerma{\mu})=[\DVerma{\mu},\irred{\lambda}]=[\Verma{\mu},\irred{\lambda}]\]
where we have used the fact that if $\irred{\lambda}$ is a factor of $\DVerma{\mu}$, then $\irred{\lambda} \cong \check{\irred{\lambda}}$ is a factor of $\Verma{\mu}$. The argument for $(1)$ is precisely the argument of \cite[Theorem 3.9 (c)]{H}, so we prove only the second claim. First, notice that $\gamma \leq \mu$ for all weights $\gamma$ of $\DVerma{\mu}$ since $\dim \DVerma{\mu}(\gamma)= \Verma{\mu}(\gamma)$ (recall Lemma \ref{Lem:duality}) so if there exists a morphism $\phi:\GVerma{\mu}{\bar{m}} \to \DVerma{\lambda}$ we must have $\mu \leq \lambda$. Further, $\irred{\lambda}$ is the unique irreducible submodule of $\DVerma{\lambda}$ so it must appear as a factor in $\mathrm{Im}(\phi)$, hence $\lambda \leq \mu$. It is also easy to see that $\dim \Hom(\GVerma{\lambda}{\bar{m}},\DVerma{\lambda})=1$ so
\[ \dim \Hom(\GVerma{\mu}{\bar{m}},\DVerma{\lambda})=\delta_{\lambda,\mu}.\]
We now prove the second claim by induction on the filtration length of $M$ with the length $1$ case following from the above equation and $(\GVerma{\mu}{\bar{m}},\GVerma{\lambda}{\bar{m}})=\delta_{\lambda,\mu}$. Suppose now that the filtration length of $M$ is $k$ and that the statement holds for modules whose filtration length is $k-1$. Then there exists a submodule $N$ of $M$ whose degree $\bar{k}$ standard filtration has length $k-1$ and satisfies a short exact sequence
\[ 0 \to N \to M \to \GVerma{\mu}{\bar{m}} \to 0.\]
 We have an induced long exact sequence
\begin{align*}
0 \to \Hom(\GVerma{\mu}{\bar{m}},\DVerma{\lambda}) &\to \Hom(M, \DVerma{\lambda}) \to \Hom(N,\DVerma{\lambda}) \to\\
& \to \mathrm{Ext}(\GVerma{\mu}{\bar{m}},\DVerma{\lambda}) \to \hdots 
\end{align*}
$\GVerma{\mu}{\bar{m}}$ clearly admits a degree $\bar{0}$ standard filtration, so $\mathrm{Ext}(\GVerma{\mu}{\bar{m}},\DVerma{\lambda})=0$ by Lemma \ref{Lem:Ext}. Hence, we have 
\begin{align*}
\dim \Hom(M,\DVerma{\lambda})&= \dim \Hom(N,\DVerma{\lambda})+\dim \Hom(\GVerma{\mu}{\bar{m}},\Verma{\lambda})\\
&=(N,\GVerma{\lambda}{\bar{m}})+(\GVerma{\mu}{\bar{m}},\Verma{\lambda})\\
&=(M,\GVerma{\lambda}{\bar{m}}).
\end{align*}
\end{proof}

We finish this subsection with an observation that the projective modules in $\mcC_{\bar{m}}$ can be characterized as tilting modules. This result will be necessary in Section \ref{Sec:projcoversl2} to give a generators and relations description for projective covers when $\mfg=\mathfrak{sl}_2$.
\begin{definition}
We call $M \in \mcC_{\bar{m}}$ a tilting module if it admits both a standard and costandard filtration of degree $\bar{m}$ (recall Definition \ref{Def:standard}).
\end{definition}
The argument for \cite[Theorem 4.15]{R} can be applied verbatim to show that projective modules are self-dual under the duality functor $M \mapsto \check{M}$, hence they are tilting since they have degree $\bar{m}$ standard filtrations which induce to degree $\bar{m}$ costandard filtrations under duality. It is known that for finite-dimensional algebras with triangular decomposition, tilting modules are precisely the projective-injective objects in the category of graded finite-dimensional modules \cite[Theorem 5.1]{BT}. The unrolled quantum group is not finite-dimensional, but we can none the less adapt the proof to our setting.  If $M \in \mcC_{\bar{m}}$ is tilting, then in particular it admits a degree $\bar{m}$ costandard filtration
\[ 0=M_0 \subset M_1 \subset \hdots M_k=M\]
with $M_j/M_{j-1} \cong \DGVerma{\lambda_j}{\bar{m}}$ for some $\lambda_j \in \mfh^*$. We therefore have a surjection $q:M \twoheadrightarrow \DGVerma{\lambda_k}{\bar{m}}$. Dual generalized Verma modules are uniquely characterized by their degree and unique simple quotient (as generalized Verma modules are also characterized by their degree and unique simple submodule). Therefore, if $\irred{\lambda}$ is the unique simple quotient of $\DGVerma{\lambda_k}{\bar{m}}$, $\DGVerma{\lambda_k}{\bar{m}}$ appears as the quotient of the costandard filtration of $\Proj{\lambda}{\bar{m}}$. We therefore have a surjection $p:\Proj{\lambda}{\bar{m}} \twoheadrightarrow \DGVerma{\lambda_k}{\bar{m}}$ and a map $h:\Proj{\lambda}{\bar{m}} \to M$ such that $p=q \circ h$ since $\Proj{\lambda}{\bar{m}}$ is projective in $\mcC_{\bar{m}}$. Consider now the short exact sequence
\[ 0 \to \mathrm{Ker}(p) \to \Proj{\lambda}{\bar{m}} \stackrel{p}{\to} \DGVerma{\lambda_k}{\bar{m}} \to 0.\]
Since $\DGVerma{\lambda_k}{\bar{m}}$ is the top of the degree $\bar{m}$ costandard filtration of $\Proj{\lambda}{\bar{m}}$, $\mathrm{Ker}(p)$ also admits a degree $\bar{m}$ costandard filtration. As $M$ is tilting, it admits a degree $\bar{m}$ standard filtration as well so $\mathrm{Ext}(M,\mathrm{Ker}(p))=0$ by the following useful lemma, which can be shown by a standard argument (see \cite[Section 6.12]{H}).
\begin{lemma}\label{Lem:Ext}
If $M,N$ have standard filtrations of the same degree, then $\mathrm{Ext}^n(M,\check{N})=0$ for all $n>0$.
\end{lemma}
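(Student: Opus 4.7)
The approach follows the standard template used in highest weight categorifications (in particular Humphreys' treatment of Category $\mcO$): a double induction on filtration lengths reduces the claim to $\mathrm{Ext}^n_{\mcC_{\bar k}}(\GVerma{\lambda}{\bar k},\DGVerma{\mu}{\bar k})=0$ for a single generalized Verma paired with a single dual generalized Verma of the same degree, after which a dimension-shift via projective covers reduces general $n$ to the base case $n=1$, which is handled by an explicit splitting.

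\emph{Reduction to single Vermas.} Since duality is exact by Lemma \ref{Lem:duality}, a degree-$\bar k$ standard filtration of $N$ becomes a degree-$\bar k$ costandard filtration of $\check N$. I induct on the sum of the filtration lengths of $M$ and $\check N$. Given a short exact sequence $0\to M'\to M\to \GVerma{\lambda}{\bar k}\to 0$ peeling off the top of the standard filtration of $M$, the long exact sequence
\[\cdots\to\mathrm{Ext}^n(\GVerma{\lambda}{\bar k},\check N)\to\mathrm{Ext}^n(M,\check N)\to\mathrm{Ext}^n(M',\check N)\to\cdots\]
sandwiches the middle term between two groups that vanish by induction; the analogous reduction in the second slot using $0\to\DGVerma{\mu}{\bar k}\to\check N\to\check N'\to 0$ leaves only the case where $M=\GVerma{\lambda}{\bar k}$ and $\check N=\DGVerma{\mu}{\bar k}$.

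\emph{Dimension shift.} The projective cover $\Proj{\lambda}{\bar k}\twoheadrightarrow\irred{\lambda}$ factors through $\GVerma{\lambda}{\bar k}\twoheadrightarrow\irred{\lambda}$, yielding $\pi:\Proj{\lambda}{\bar k}\twoheadrightarrow\GVerma{\lambda}{\bar k}$. The degree-$\bar k$ standard filtration of $\Proj{\lambda}{\bar k}$ furnished by Lemma \ref{Prop:projfilt} can be arranged so that $\GVerma{\lambda}{\bar k}$ is its top quotient, since by Theorem \ref{Thm:BGG} the multiplicity $(\Proj{\lambda}{\bar k},\GVerma{\lambda}{\bar k})=[\Verma{\lambda},\irred{\lambda}]=1$ and $\irred{\lambda}$ is the head of $\Proj{\lambda}{\bar k}$. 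Consequently $\ker\pi$ inherits a degree-$\bar k$ standard filtration, and the long exact sequence
\[\mathrm{Ext}^{n-1}(\ker\pi,\DGVerma{\mu}{\bar k})\to\mathrm{Ext}^n(\GVerma{\lambda}{\bar k},\DGVerma{\mu}{\bar k})\to\mathrm{Ext}^n(\Proj{\lambda}{\bar k},\DGVerma{\mu}{\bar k})=0\]
reduces $\mathrm{Ext}^n$-vanishing for $n\geq 2$ to $\mathrm{Ext}^{n-1}$-vanishing on a pair already covered by the reduction step, leaving only the case $n=1$.

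\emph{Base case and main obstacle.} For $n=1$, consider an extension $0\to\DGVerma{\mu}{\bar k}\to E\to\GVerma{\lambda}{\bar k}\to 0$ in $\mcC_{\bar k}$. Since $E\in\mcC_{\bar k}$, any lift $\tilde v\in E$ of the generator $v_\lambda^{\bar k}$ has weight $\lambda$ and, by comparing degrees against its image, degree exactly $\bar k$. By the universal property of generalized Verma modules (Definition \ref{Def:genverma}), producing a splitting is equivalent to exhibiting such a $\tilde v$ with $X_j\tilde v=0$ for every $j$. Each obstruction $X_j\tilde v$ lies in the weight-$(\lambda+\alpha_j)$ subspace of $\DGVerma{\mu}{\bar k}$, which vanishes whenever $\lambda+\alpha_j\not\leq\mu$; in the remaining cases one kills it by correcting $\tilde v$ by a degree-$\bar k$ vector $z$ in the $\lambda$-generalized weight space of $\DGVerma{\mu}{\bar k}$, using that $\DGVerma{\mu}{\bar k}$ has $\irred{\mu}$ as its unique simple submodule and that the family $(X_j\tilde v)_j$ satisfies the consistency conditions forced by the Serre relations of $\UHbar{\mfg}^+$, so must be a coboundary. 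The delicate point, and the only place where the standard-filtration hypothesis on both arguments is used in an essential way, is the atypical regime $\lambda<\mu$: verifying that the obstruction is a coboundary (and not a genuine class) demands careful weight-and-degree bookkeeping against the PBW basis \eqref{eq:Vermabasis} of $\DGVerma{\mu}{\bar k}$, and is the step most sensitive to the particular structure of $\UHbar{\mfg}$.
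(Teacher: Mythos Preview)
Your overall template is exactly the one the paper defers to (Humphreys, \S6.12): dévissage to a single pair $(\GVerma{\lambda}{\bar k},\DGVerma{\mu}{\bar k})$, then dimension shift via $\Proj{\lambda}{\bar k}$, then the $n=1$ base case. Two points need repair.

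\textbf{Circularity in the dimension shift.} You invoke Theorem~\ref{Thm:BGG} to get $(\Proj{\lambda}{\bar k},\GVerma{\lambda}{\bar k})=1$, but the proof of Theorem~\ref{Thm:BGG} \emph{uses} Lemma~\ref{Lem:Ext}. What you actually need is only that $\ker\pi$ carries a degree-$\bar k$ standard filtration, and this is available prior to BGG: by Lemma~\ref{Lem:tensorstandard} the standard filtration of $\GVerma{\lambda-\mu}{\bar k}\otimes L_{-\mu}^*$ has $\GVerma{\lambda}{\bar k}$ as its top quotient (the lowest weight of $L_{-\mu}^*$ is $\mu$), and passing to the summand $\Proj{\lambda}{\bar k}$ preserves this since its head is $\irred{\lambda}$. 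Drop the appeal to BGG and argue this way.

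\textbf{The base case $\lambda<\mu$ is not proved.} Your final paragraph asserts that the obstruction $(X_j\tilde v)_j$ ``must be a coboundary'' by Serre-relation consistency and weight bookkeeping, but this is a hope, not an argument; you give no mechanism producing the correcting vector $z$, and in fact for generic atypical $\lambda<\mu$ there is no reason the $\lambda$-weight space of $\DGVerma{\mu}{\bar k}$ should surject onto the relevant target under the $X_j$. The standard (and short) fix is duality: since $M\mapsto\check M$ is exact and involutive (Lemma~\ref{Lem:duality}), applying it to an extension class yields
\[
\mathrm{Ext}^1_{\mcC_{\bar k}}\big(\GVerma{\lambda}{\bar k},\DGVerma{\mu}{\bar k}\big)\;\cong\;\mathrm{Ext}^1_{\mcC_{\bar k}}\big(\GVerma{\mu}{\bar k},\DGVerma{\lambda}{\bar k}\big),
\]
and now $\mu\not<\lambda$, so your easy case ($X_j\tilde v=0$ because $\DGVerma{\lambda}{\bar k}$ has no weights $>\lambda$) applies. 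With these two corrections your proof is the standard one the paper cites.
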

We therefore obtain an exact sequence
\[ 0 \to \Hom(M,\Ker(p)) \to \Hom(M,\Proj{\lambda}{\bar{m}}) \stackrel{p_*}{\to} \Hom(M,\DGVerma{\lambda_k}{\bar{m}}) \to 0\]
where $p_*$ denotes composition by $p$. In particular,
\[ p_*:\Hom(M,\Proj{\lambda}{\bar{m}}) \to \Hom(M,\DGVerma{\lambda_k}{\bar{m}})\]
is a surjection, so there exists some $g \in \Hom(M,\Proj{\lambda}{\bar{m}})$ such that $p \circ g=q$. It follows now that
\[p \circ g \circ h = q \circ h=p \]
so $g \circ h:\Proj{\lambda}{\bar{m}}\to \Proj{\lambda}{\bar{m}}$ is an isomorphism since $p$ is essential and therefore $g:M \to \Proj{\lambda}{\bar{m}}$ is surjective. By projectivity, $\Proj{\lambda}{\bar{m}}$ is a summand of $M$ so it follows by a trivial induction argument on filtration length that $M$ is in fact projective. We therefore have the following lemma.
\begin{proposition}\label{Prop:tiltingproj}
For any $\bar{m} \in \mathbb{N}^n$, $M \in \mcC_{\bar{m}}$ is tilting if and only if it is projective. 
\end{proposition}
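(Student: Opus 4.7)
The plan is to prove both implications separately. For the forward direction, projective implies tilting, Lemma \ref{Prop:projfilt} already equips every projective $P \in \mcC_{\bar m}$ with a degree $\bar m$ standard filtration, so it suffices to produce a costandard filtration of the same degree. I would establish self-duality $P \cong \check P$ under the functor $M \mapsto \check M$ of Definition \ref{Def:dual} by the same argument as in \cite[Theorem 4.15]{R}, which then lets me apply the exact contravariant functor $\check{(-)}$ to a degree $\bar m$ standard filtration of $P$ to produce a costandard filtration of $P$ of the same degree.

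For the reverse direction, let $M \in \mcC_{\bar m}$ be tilting with a costandard filtration of length $k$; I would induct on $k$. The top quotient gives a surjection $q \colon M \twoheadrightarrow \DGVerma{\lambda}{\bar m}$ whose target has $\irred{\lambda}$ as unique simple quotient. By the forward direction just proved, $\Proj{\lambda}{\bar m}$ is itself tilting, and the top of any costandard filtration of $\Proj{\lambda}{\bar m}$ must be $\DGVerma{\lambda}{\bar m}$, since the unique simple quotient of that top piece must match $\irred{\lambda}$. This supplies a surjection $p \colon \Proj{\lambda}{\bar m} \twoheadrightarrow \DGVerma{\lambda}{\bar m}$, and projectivity of $\Proj{\lambda}{\bar m}$ lifts $p$ through $q$ to produce $h \colon \Proj{\lambda}{\bar m} \to M$ with $q \circ h = p$.

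The crux, and the main obstacle, is to reverse the arrow and produce $g \colon M \to \Proj{\lambda}{\bar m}$ with $p \circ g = q$. Here Lemma \ref{Lem:Ext} is essential: the kernel of $p$ inherits the lower portion of the costandard filtration of $\Proj{\lambda}{\bar m}$, while $M$ carries a degree $\bar m$ standard filtration by hypothesis, so $\Ext^1(M, \Ker(p)) = 0$. Applying $\Hom(M, -)$ to $0 \to \Ker(p) \to \Proj{\lambda}{\bar m} \to \DGVerma{\lambda}{\bar m} \to 0$ then yields a surjection $p_* \colon \Hom(M, \Proj{\lambda}{\bar m}) \twoheadrightarrow \Hom(M, \DGVerma{\lambda}{\bar m})$ through which $q$ lifts to the desired $g$.

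Finally, $p \circ (g \circ h) = q \circ h = p$, so by essentiality of $p$ (the composition $\Proj{\lambda}{\bar m} \twoheadrightarrow \DGVerma{\lambda}{\bar m} \twoheadrightarrow \irred{\lambda}$ is the essential projective cover map, so no proper submodule of $\Proj{\lambda}{\bar m}$ can surject onto $\DGVerma{\lambda}{\bar m}$), $g \circ h$ is a surjective endomorphism of the finite-dimensional module $\Proj{\lambda}{\bar m}$, hence an isomorphism. Therefore $\Proj{\lambda}{\bar m}$ splits off $M$ as a direct summand, and the complement inherits a strictly shorter tilting structure in $\mcC_{\bar m}$; the induction hypothesis then completes the proof.
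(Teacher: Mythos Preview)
Your proposal is correct and follows essentially the same argument as the paper: self-duality via \cite[Theorem 4.15]{R} for the projective $\Rightarrow$ tilting direction, and for the converse the same lifting/splitting argument using the vanishing $\Ext^1(M,\Ker(p))=0$ from Lemma \ref{Lem:Ext} to produce $g$, then essentiality of $p$ to conclude that $g\circ h$ is an automorphism and hence $\Proj{\lambda}{\bar m}$ splits off. Your justification that $p$ is essential (via the composite $\Proj{\lambda}{\bar m}\twoheadrightarrow \DGVerma{\lambda}{\bar m}\twoheadrightarrow \irred{\lambda}$ being the projective cover map) and your remark that the complement inherits a tilting structure are slightly more explicit than the paper, but the substance is identical.
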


\section{Projective Covers for Irreducible $\UHbar{\mathfrak{sl}_2}$-modules.}\label{Sec:projcoversl2}
We now restrict ourselves to the rank one case $\overline{U}_q^H(\mathfrak{sl}_2)$. To maintain consistency with existing literature on unrolled quantum groups, we will adopt the following change of notation: $\overline{U}_q^H(\mathfrak{sl}_2)$ is the $\mathbb{C}$-algebra generated by $K,K^{-1},E,F,H$ subject to
\begin{align}\label{eq:sl21}
 KK^{-1}&=K^{-1}K=1, & KE&=q^2EK, & KF&=q^{-2}FK, & [E,F]&=\frac{K-K^{-1}}{q-q^{-1}},\\
\label{eq:sl22}HK&=KH, & [H,E]&=2E, & [H,F]&=-2F.
\end{align}
In what follows, recall that $r=\ell$ if $\ell$ is odd and $r=\ell/2$ if $\ell$ is even. By Proposition \ref{Prop:irred}, the irreducible modules in $\mcC$ are the finite-dimensional highest weight modules. These were classified for $\overline{U}_q^H(\mathfrak{sl}_2)$ in \cite{CGP}. For any $\alpha \in \mathbb{C}$, let $M_{\alpha}$ be the $r$-dimensional module generated by a highest weight vector of weight $\alpha$ (i.e. the Verma module of highest weight $\alpha$). For each $i \in \{0,...,r-2\}$ let $L_i$ denote the $(i+1)$-dimensional irreducible module with vector space basis $\{s_0,s_1,...,s_i\}$ satisfying
\[ Fs_k=s_{k+1}, \quad Hs_k=(i-2k)s_k, \quad Es_0=Fs_i=0, \quad Es_k=[k][i+1-k]s_{k-1}. \]
Further, let $\mathbb{C}^H_{\frac{k\ell}{2}}$ be the one dimensional module on which $E,F$ act as zero and $H$ acts by $\frac{k\ell}{2}$ and let 
\[ \ddot{\mathbb{C}}:=\begin{cases}
(\mbbC \setminus \mathbb{Z}) \cup r\mathbb{Z} & \text{ if $\ell$ is even}\\
(\mbbC \setminus \frac{1}{2}\mathbb{Z}) \cup \frac{r}{2}\mathbb{Z} & \text{ if $\ell$ is odd}.
\end{cases}
 \]
Every irreducible module in $\mcC$ is isomorphic to exactly one of the following \cite[Sections 5 \& 10]{CGP}:
\begin{itemize}
\item $L_i \otimes \mathbb{C}_{\frac{k\ell}{2}}^H$, for $i=0,...,r-2,k \in \mathbb{Z}$.
\item $M_{\alpha}$ for $\alpha+1 \in \ddot{\mathbb{C}}$.
\end{itemize}
The $M_{\alpha}$ for $\alpha+1 \in \ddot{\mathbb{C}}$ are the irreducible Verma modules of typical highest weight. Their projective cover in $\mcC_m$ for $m \in \mathbb{N}$ (recall now that $n=\mathrm{rank}(\mathfrak{sl}_2)=1$) is the generalized Verma module $\GVerma{\alpha}{m}$ of degree $m$. The projective covers of the modules $L_i \otimes \mathbb{C}_{\frac{k\ell}{2}}^H$, for $i=0,...,r-2,k \in \mathbb{Z}$ in $\mcC_0$ were explicitly determined for even roots of unity in \cite[Section 6]{CGP} where they were shown to take the form $P_i \otimes \mathbb{C}_{\frac{k \ell}{2}}^H$ where $P_i$ is generated by a ``dominant" weight vector of weight $i$. That is, a vector $v$ satisfying $(FE)^2v=0$ and $Hv=iv$. In what follows, we consider the structure of modules generated by dominant vectors with non-semisimple $H$-action. We refer to a vector $v$ in a module $V \in \mcC$ as dominant of weight $i$ and degree $m\in \mathbb{N}$ if
\[ (FE)^2v=0, \qquad (H-i)^{m+1}v=0, \qquad (H-i)^{m}v\not =0. \]
We consider now the structure of submodules generated by such vectors.
\begin{proposition}\label{Prop:projgen}
Let $v \in V$ be dominant of weight $i \in  \{0,1,...,r-2\}$ and degree $m$ and let $j=r-2-i$. Then the submodule of $V$ generated by $v$ has a basis given by the $2(m+1)r$ vectors 
\[\w_{i-2k,s}^T,\quad \w_{i-2k,s}^S, \quad \w_{j-r-2k',s}^L, \quad \w_{r-j+2k',s}^R\] with $k \in \{0,...,i\}$, $k' \in \{0,...,j\}$, and $s\in \{0,...,m\}$ defined by
\begin{align}
\label{eq:proj1}\w_{i,m}^T:=v, \quad \w_{r-j,m}^R:=E\w_{i,m}^T, \quad \w_{i,m}^S:=F\w_{r-j,m}^R, \quad \w_{j-r,m}^L:=F^{i+1}\w_{i,m}^T,
\end{align}
\vspace{-2em}
\begin{align}
\label{eq:proj2}\w_{i-2k,m}^T&:=F^k\w_{i,m}^T & &\mathrm{and} & \w_{i-2k,m}^S&:=F^k\w_{i,m}^S & &\text{for $k \in \{0,...,i\}$,}\\
\label{eq:proj3}\w_{j-r-2k,m}^L&:=F^k\w_{j-r,m}^L, & &\mathrm{and} &  \w_{r-j+2k,m}^R&:=E^k\w_{r-j,m}^R, & &\text{for $k \in \{0,...,j\}$,}
\end{align}
\vspace{-2em}
\begin{align}
\label{eq:proj4}\w_{t,s}^X:=(H-t)^{m-s}\w_{t,m}^X
\end{align}
where $X \in \{T,S,L,R\}$ and $t$ takes values as in Equations \eqref{eq:proj2}-\eqref{eq:proj3}.
These vectors satisfy the following relations whenever the vectors involved are defined:
\begin{align}
\label{eq:proj5}(H-t)^{s+1}\w_{t,s}^X&=0, & H\w_{t,s}^X&=\w_{t,s-1}^X+t\w_{t,s}^X, & &\mathrm{for} \; X\in \{T,S,L,R\}  \\
\label{eq:proj6}\; & &K\w_{t,s}^X&=q^t\sum\limits_{k=0}^s \left(\frac{2 \pi \mathsf{i}}{\ell}\right)^k\w^X_{t,s-k} & &\mathrm{for} \; X \in \{T,S,L,R\} \\
\label{eq:proj7}E\w_{t,s}^R&=\w_{t+2,s}^R, & F\w_{t,s}^X&=\w_{t-2,s}^X & &\mathrm{for} \; X \in \{T,S,L\} 
\end{align}
\vspace{-2em}
\begin{align}
\label{eq:proj8} (FE)^2\w_{i,s}^T=0 , \quad F\w_{-i,s}^T=\w_{j-r,s}^L \quad E\w_{i,s}^T=\w_{r-j,s}^R, \quad F\w_{r-j,s}^R=\w_{i,s}^S
\end{align}
\vspace{-2em}
\begin{align}\label{eq:proj9}
E\w_{j-r,s}^L&=\w_{-i,s}^S+\sum\limits_{t=0}^{\lfloor \frac{s-1}{2} \rfloor} \left(\frac{2 \pi \mathsf{i}}{\ell}\right)^{2t+1}\frac{2[i+1]}{q-q^{-1}} \w_{-i,s-2t-1}^T,
\end{align}
\vspace{-1em}
\begin{align}\label{eq:proj10}
E\w_{j+r,s}^R=E\w_{i,s}^S=F\w_{-i,s}^S=F\w_{-j-r,s}^L=0,
\end{align}
\vspace{-2em}
\begin{align}
\label{eq:proj11}E\w_{i-2k,s}^T=\w_{i-2k+2,s}^S&+\sum\limits_{t=0}^{\lfloor \frac{s-1}{2} \rfloor}\left(\frac{2\pi \mathsf{i}}{\ell}\right)^{2t+1} \frac{q^{i-k+1}+q^{-i+k-1}}{q-q^{-1}}[k]\w_{i-2k+2,s-2t-1}^T\\
\nonumber&+\sum\limits_{t=0}^{\lfloor \frac{s}{2} \rfloor} \left(\frac{ 2 \pi \mathsf{i}}{\ell}\right)^{2t} [k][i-k+1]\w_{i-2k+2,s-2t}^T,
\end{align}
\vspace{-1em}
\begin{align}\label{eq:proj12}
 E\w_{i-2k,s}^S&=\sum\limits_{t=0}^{\lfloor \frac{s-1}{2} \rfloor}\left(\frac{2\pi \mathsf{i}}{\ell}\right)^{2t+1} \frac{q^{i-k+1}+q^{-i+k-1}}{q-q^{-1}}[k]\w_{i-2k+2,s-2t-1}^S\\
\nonumber&+\sum\limits_{t=0}^{\lfloor \frac{s}{2} \rfloor} \left(\frac{ 2 \pi \mathsf{i}}{\ell}\right)^{2t} [k][i-k+1]\w_{i-2k+2,s-2t}^S,
\end{align}
\vspace{-1em}
\begin{align}
\label{eq:proj13}E\w_{j-r-2k,s}^L&=\sum\limits_{t=0}^{\lfloor \frac{s-1}{2}\rfloor} \left(\frac{2\pi \mathsf{i}}{\ell}\right)^{2t+1}\frac{q^k+q^{-k}}{q-q^{-1}}[j-k+1]\w_{j-r-2k+2,s-2t-1}^L\\
\nonumber&-\sum\limits_{t=0}^{\lfloor \frac{s}{2}\rfloor}\left(\frac{2\pi \mathsf{i}}{\ell}\right)^{2t} [j-k+1][k]\w_{j-r-2k+2,s-2t}^L,
\end{align}
\vspace{-1em}
\begin{align}\label{eq:proj14}
F\w_{r-j+2k,s}^R&=\sum\limits_{t=0}^{\lfloor \frac{s-1}{2} \rfloor} \left(\frac{2 \pi \mathsf{i}}{\ell} \right)^{2t+1} \frac{q^{j-k+1}+q^{-j+k-1}}{q-q^{-1}} [k]\w_{r-j+2k+2,s-2t-1}^R\\
\nonumber&-\sum\limits_{t=0}^{\lfloor \frac{2}{2}\rfloor} \left(\frac{2\pi \mathsf{i}}{\ell} \right)^{2t} [j-k+1][k]\w_{r-j-2k+2,s-2t}^R.
\end{align}
\end{proposition}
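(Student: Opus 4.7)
The plan is to verify the relations \eqref{eq:proj5}-\eqref{eq:proj14} in turn and then argue that the listed vectors form a basis of $\langle v \rangle$. By construction each $\w_{t,s}^X$ is obtained by applying an explicit element of $\UHbar{\mathfrak{sl}_2}$ to $v$ (for instance $\w_{i-2k,s}^T = (H-(i-2k))^{m-s} F^k v$ and $\w_{r-j+2k,s}^R = (H-(r-j+2k))^{m-s} E^{k+1} v$), so these vectors lie in $\langle v \rangle$, and once closure under the $\UHbar{\mathfrak{sl}_2}$-action is established they generate $\langle v \rangle$.

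The $H$-action \eqref{eq:proj5} is immediate from $\w_{t,s}^X = (H-t)^{m-s}\w_{t,m}^X$ once one checks $(H-t)^{m+1}\w_{t,m}^X = 0$, which reduces via the shifting identity \eqref{eq:HX} to the dominance identity $(H-i)^{m+1} v = 0$. The $K$-action \eqref{eq:proj6} then follows from $K = q^H = q^t\, q^{H-t}$ by expanding the exponential and truncating against the $H$-depth. The easy $F,E$-actions \eqref{eq:proj7} also follow directly from \eqref{eq:HX}, and the base cases in \eqref{eq:proj8} and \eqref{eq:proj10} follow from the definitions together with dominance: for instance $(FE)^2\w_{i,s}^T = (H-i)^{m-s}(FE)^2 v = 0$ because $FE$ commutes with $(H-i)$ on the weight-$i$ space, while $E\w_{j+r,s}^R = 0$ and $F\w_{-j-r,s}^L = 0$ come from $E^{j+2}v = 0$ and $F^r v = 0$ in $\UHbar{\mathfrak{sl}_2}$. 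The bulk of the work is Equations \eqref{eq:proj9} and \eqref{eq:proj11}-\eqref{eq:proj14}, for which I would apply the quantum commutator
\[ [E, F^k] = [k]\, F^{k-1}\, \frac{q^{1-k} K - q^{k-1} K^{-1}}{q - q^{-1}} \]
(and its $[F, E^k]$ analogue) to expand $EF^k$ and $FE^k$ acting on $v$, then substitute the $K$-action \eqref{eq:proj6} for the commutator term. Splitting the resulting sum by the parity of the exponent of $(2\pi\mathsf{i}/\ell)$ produces the two displayed sums: even parity yields $q^{i-k+1} - q^{-(i-k+1)} = (q-q^{-1})[i-k+1]$, producing the $[k][i-k+1]$ terms, and odd parity yields the symmetric combination $(q^{i-k+1} + q^{-(i-k+1)})/(q-q^{-1})$.

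Once all relations are established, spanning is automatic: the span of the $\w$'s is closed under $\UHbar{\mathfrak{sl}_2}$ and contains $v$. Linear independence decomposes by weight: vectors at distinct weights $t$ lie in distinct generalized weight spaces, the $\w^T$/$\w^S$ pair at weight $i-2k$ is distinguished by \eqref{eq:proj11}-\eqref{eq:proj12} (the term $\w_{i-2k+2,s}^S$ appears in $E\w_{i-2k,s}^T$ but never in $E\w_{i-2k,s}^S$), and within a fixed chain $\{\w_{t,s}^X\}_{s=0}^m$ linear independence reduces to $\w_{t,m}^X$ having $H$-depth exactly $m+1$, which via \eqref{eq:HX} reduces to $(H-i)^m F^a E^b v \neq 0$ for the appropriate exponents. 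This last point follows from the fact that $v_0 := (H-i)^m v$ is a nonzero dominant weight vector of degree $0$ and therefore generates a copy of the degree-zero projective $P_i$ of \cite[Proposition~6.2]{CGP} inside $\langle v \rangle$, inside which all the relevant images $F^a E^b v_0$ are nonzero. The main obstacle will be the parity-split double series manipulation in the middle paragraph; everything else reduces to routine use of \eqref{eq:HX} and the defining relations.
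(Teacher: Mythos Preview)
Your overall architecture matches the paper's, but your treatment of \eqref{eq:proj10} has a genuine gap. You write that $E\w_{j+r,s}^R=0$ ``comes from $E^{j+2}v=0$ \dots\ in $\UHbar{\mathfrak{sl}_2}$'', but $j+2=r-i$, so $E^{j+2}$ is \emph{not} zero in the algebra unless $i=0$, and nothing in the definition of a dominant vector forces $E^{r-i}v=0$. More seriously, you do not mention the two remaining vanishings $F\w_{-i,s}^S=0$ and $E\w_{i,s}^S=0$ at all. These are the genuinely delicate parts of \eqref{eq:proj10}: the paper obtains $F\w_{-i,s}^S=0$ by a descending induction on the exponent of $F$, starting from $F^{r-2}\w_{i,s}^S=0$ (extracted from $F^r=0$ applied to $\w_{-j+r+2,s}^R$) and using that the putative nonzero terms would sit in distinct generalized weight spaces; it then gets $E\w_{i,s}^S=0$ by an induction on $s$ whose base case is exactly the degree-zero result of \cite{CGP}. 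The vanishing $E\w_{j+r,s}^R=0$ is handled by the mirror of the first of these inductions, not by an algebra relation. Without $F\w_{-i,s}^S=0$ you cannot derive \eqref{eq:proj13} from your commutator expansion (the term $F^k E\w_{j-r,s}^L$ produces $F^k\w_{-i,s}^S$, which must vanish for the answer to land in the $\w^L$-span), and similarly $E\w_{i,s}^S=0$ is needed for \eqref{eq:proj14}. So your sentence ``the bulk of the work is Equations \eqref{eq:proj9} and \eqref{eq:proj11}--\eqref{eq:proj14}'' understates the situation: the real bottleneck is \eqref{eq:proj10}, and it cannot be dismissed as a ``base case''.

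A smaller point: in your linear-independence sketch the parenthetical ``the term $\w_{i-2k+2,s}^S$ appears in $E\w_{i-2k,s}^T$ but never in $E\w_{i-2k,s}^S$'' is false as stated, since the $t=0$ term of the second sum in \eqref{eq:proj12} contributes $[k][i-k+1]\,\w_{i-2k+2,s}^S$. The paper instead separates the $T$- and $S$-chains by acting with $F^{i+1}$, which annihilates every $\w^S$ and sends each $\w_{i-2k,s}^T$ to $\w_{j-r-2k,s}^L$; this is both cleaner and avoids the inductive bookkeeping your approach would require.
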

\begin{proof}
We first prove that Equations \eqref{eq:proj5}-\eqref{eq:proj8} hold. It follows immediately from Equation \eqref{eq:proj4} that 
\[ \w_{t,s-1}^X=(H-t)^{m-s+1}\w_{t,m}^X=(H-t)\w_{t,s}^X\]
so $H\w_{t,s}^X=\w_{t,s-1}^X+t\w_{t,s}^X$, as desired. By definition, we have
\[ (H-t)^{m+1}\w_{t,m}^T =0, \qquad \mathrm{and} \qquad (H-t)^m\w_{t,m}^T \not = 0,\]
so $(H-t)^{s+1}\w_{t,s}^T=(H-t)^{s+1}(H-t)^{m-s}\w_{t,m}^T=0$ and Equation \eqref{eq:proj5} holds. Recall that $K=q^H$ as operations on $V$ so
\[ K=q^H=q^{t+H-t}=q^tq^{H-t}=q^t\sum\limits_{k=0}^{\infty} \left(\frac{2 \pi \mathsf{i}}{\ell}\right)^k(H-t)^k \]
as an operator on $V$. In particular,
\begin{align*} K\w_{t,s}^X=q^t \sum\limits_{k=0}^s \left(\frac{2 \pi \mathsf{i}}{\ell}\right)^k (H-t)^k\w_{t,s}=q^t \sum\limits_{k=0}^s \left(\frac{2 \pi \mathsf{i}}{\ell}\right)^k \w_{t,s-k}
\end{align*}
where we have used Equation \eqref{eq:proj4} in the last equality, proving that Equation \eqref{eq:proj6} holds. Observe now that the following relations hold by Equation \eqref{eq:HX}:
\begin{align}
\label{eq:E(H-t)}E\w_{t,s}^X=E(H-t)^{m-s}\w_{t,m}^X=(H-(t+2))^{m-s}E\w_{t,m}^X,\\
\label{eq:F(H-t)}F\w_{t,s}^X=F(H-t)^{m-s}\w_{t,m}^X=(H-(t-2))^{m-s}F\w_{t,m}^X.
\end{align}
Hence, for any $k \in \{0,...,j\}$ we have 
\begin{align*} E\w_{r-j+2k,s}^R&=(H-(r-j+2k+2))^{m-s}E\w_{r-j+2k,m}^R\\
&=(H-(r-j+2(k+1)))^{m-s}\w_{r-j+2(k+1),m}^R=\w_{r-j+2k+2,s}^R
\end{align*}
where we have used Equations \eqref{eq:proj1} and \eqref{eq:proj3} so $E\w_{t,s}^R=\w_{t+2,s}^R$. The proof that $F\w_{t,s}^X=\w_{t-2,s}^X$ for $X \in \{T,S,L\}$ is similar, so Equation \eqref{eq:proj7} holds. We now show that Equation \eqref{eq:proj8} holds. Notice first that by Equations \eqref{eq:proj1}, \eqref{eq:F(H-t)}, and \eqref{eq:E(H-t)}, we have
\begin{align*} F\w_{-i,s}^T&=(H+i+2)^{m-s}F\w_{-i,m}^T & E\w_{i,s}^T&=(H-i-2)^{m-s}E\w_{i,m}^T\\
&=(H+i+2)^{m-s}F^{i+1}\w^T_{i,m} & &=(H-i-2)^{m-s}\w_{r-j,m}^R\\
&=(H-(j-r))^{m-s}\w_{j-r,m}^L & &=(H-(r-j))^{m-s}\w_{r-j,m}^R \\
&=\w_{j-r,s}^L & &=\w_{r-j,s}^R
\end{align*}
The relation $F\w_{r-j,s}^R=\w_{i,s}^S$ also follows by Equation \eqref{eq:proj1} and \eqref{eq:F(H-t)} by an identical argument. Finally, by Equations \eqref{eq:E(H-t)} and \eqref{eq:F(H-t)} we have
\[ (EF)^2\w_{i,s}^T=(H-t)^{m-s}(EF)^2\w_{t,m}^T=0\]
 so Equation \eqref{eq:proj8} holds. Consider now the relation
\begin{equation}\label{eq:EF}
EF^k=F^kE+\sum\limits_{t=0}^{k-1}F^{k-1-t}\frac{K-K^{-1}}{q-q^{-1}}F^t
\end{equation}
which can be easily shown by induction, and observe that
\begin{align*}
(K-K^{-1})\w_{t,s}^X&=q^t\sum\limits_{k=0}^s \left(\frac{2\pi \mathsf{i}}{\ell}\right)^k \w_{t,s-k}^X-q^{-t}\sum\limits_{k=0}^s\left(\frac{-2\pi \mathsf{i}}{\ell}\right)^k\w_{t,s-k}\\
&=\sum\limits_{k=0}^s \left(\frac{2\pi \mathsf{i}}{\ell}\right)^k(q^t-(-1)^kq^{-t})\w_{t,s-k}^X.
\end{align*}
Applying these two equations gives us the following for $X=T,S,L$:
\begin{align}
\nonumber E\w_{i-2k,s}^X=EF^k\w_{i,s}^X=F^kE\w_{i,s}^X&+\sum\limits_{u=0}^{k-1}F^{k-1-u}\frac{K-K^{-1}}{q-q^{-1}}F^u\w_{i,s}^X\\
\nonumber =F^kE\w_{i,s}^X&+\sum\limits_{u=0}^{k-1}F^{k-1-u}\sum\limits_{t=0}^s \left(\frac{2\pi \mathsf{i}}{\ell}\right)^m\frac{(q^{i-2u}-(-1)^mq^{-i+2u})}{q-q^{-1}}F^u\w_{i,s-t}^X\\
\label{eq:projcomp}=F^kE\w_{i,s}^X&+\sum\limits_{t=0}^s \left(\frac{2\pi \mathsf{i}}{\ell}\right)^m\sum\limits_{u=0}^{k-1}\frac{(q^{i-2u}-(-1)^mq^{-i+2u})}{q-q^{-1}}F^{k-1}\w_{i,s-t}^X\\
\nonumber =F^kE\w_{i,s}^X&+\sum\limits_{t=0}^{\lfloor \frac{s-1}{2} \rfloor}\left(\frac{2\pi \mathsf{i}}{\ell}\right)^{2t+1} \frac{q^{i-k+1}+q^{-i+k-1}}{q-q^{-1}}[k]F^{k-1}\w_{i,s-2t-1}^X\\
\nonumber &+\sum\limits_{t=0}^{\lfloor \frac{s}{2} \rfloor} \left(\frac{ 2 \pi \mathsf{i}}{\ell}\right)^{2t} [k][i-k+1]F^{k-1}\w_{i,s-2t}^X.
\end{align}
Here we have broken our sum over $t$ into odd and even values and applied the easily verified identities
\[ \sum\limits_{u=0}^{k-1}\frac{q^{i-2u}+q^{-i+2u}}{q-q^{-1}}=\frac{q^{i-k+1}+q^{-i+k-1}}{q-q^{-1}}[k] \quad \mathrm{and} \quad \sum\limits_{u=0}^{k-1}\frac{q^{i-2u}-q^{-i+2u}}{q-q^{-1}}=[k][i-k+1]. \]
Taking $X=T$ and recalling that $F^kE\w_{i,s}^T=F^k\w_{r-j,s}^R=\w_{i-2k+2,s}^S$ gives us Equation \eqref{eq:proj11}. Notice that we can also choose $k=i+1$ in the above computation to obtain Equation \eqref{eq:proj9}:
\begin{align*}
E\w_{j-r,s}^L=EF^{i+1}\w_{i,s}^T&=\w_{-i,s}^S+\sum\limits_{t=0}^{\lfloor \frac{s-1}{2} \rfloor} \left(\frac{2 \pi \mathsf{i}}{\ell}\right)^{2t+1}\frac{2[i+1]}{q-q^{-1}} \w_{-i,s-2t-1}^T.
\end{align*}
Further, $X=S$ in Equation \eqref{eq:projcomp} provides
\begin{align*}
 E\w_{i-2k,s}^S=F^kE\w_{i,s}^S &+\sum\limits_{t=0}^{\lfloor \frac{s-1}{2} \rfloor}\left(\frac{2\pi \mathsf{i}}{\ell}\right)^{2t+1} \frac{q^{i-k+1}+q^{-i+k-1}}{q-q^{-1}}[k]\w_{i-2k+2,s-2t-1}^S\\
&+\sum\limits_{t=0}^{\lfloor \frac{s}{2} \rfloor} \left(\frac{ 2 \pi \mathsf{i}}{\ell}\right)^{2t} [k][i-k+1]\w_{i-2k+2,s-2t}^S.
\end{align*}
where $F^kE\w_{i,S}^S=F^kEFE\w_{i,s}^T=0$ as $(FE)^2\w_{i,s}^T=0$ so Equation \eqref{eq:proj12} holds. To prove Equation \eqref{eq:proj13}, we first show that $F\w_{-i,s}^S=0$ starting with $F^{r-2}\w_{i,s}^S=0$ for all $s=0,...,m$. Indeed, we have
\begin{align*} F\w_{-j+r+2,s}^R=FE\w_{-j+r,s}^R&=EF\w_{-j+r,s}^R-\frac{K-K^{-1}}{q-q^{-1}}\w_{-j+r,s}^R\\
&=E\w_{i,s}^S-\sum\limits_{k=0}^s \left(\frac{2 \pi \mathsf{i}}{\ell}\right)^k \frac{(q^{-j+r}-(-1)^kq^{r-j})}{q-q^{-1}} \w_{-j+r,s-k}^R.
\end{align*}
Recall that $(FE)\w_{i,s}^S=(FE)^2\w_{i,s}^T=0$ so acting on both sides by $F^{r-1}$ gives
\[ 0=-F^r\w_{-j+r+2,s}^R=\sum\limits_{k=0}^s \left(\frac{2 \pi \mathsf{i}}{\ell}\right)^k \frac{(q^{-j+r}-(-1)^kq^{r-j})}{q-q^{-1}} F^{r-2}\w_{i,s-k}^S. \]
If the terms $F^{r-2}\w_{i,s-k}^S$, $k=0,...,s$ are non-zero, they lie in different generalized weight spaces so they are linearly independent, contradicting the above equation. Hence, $F^{r-2}\w_{i,s}^S=0$ for all $s=0,...,m$ which will form the base step of an induction argument. Suppose now that $F^k\w_{i,s}^S=0$ for $r-2 \leq k \leq i+2$ and in particular, $EF^{k}\w_{i,s}^S=0$ for all $s=0,...,m$. We can now use Equation \eqref{eq:projcomp} to show that a particular linear combination of $\{F^{k-1}\w_{i,s-t}^S\}_{t=0}^s$ with non-zero coefficients vanishes. If these terms are non-zero they are linearly independent as they lie in different generalized weight spaces which gives a contradiction, so $F^{k-1}\w_{i,s}^S=0$ for all $s=0,...,m$ . Hence, the induction step holds and $F\w_{-i,s}^S=F^{i+1}\w_{i,s}^S=0$. Equation \eqref{eq:proj13} now follows from a routine application of Equation \eqref{eq:projcomp}. It is not difficult to see from Equations \eqref{eq:proj9}, \eqref{eq:proj12}, and \eqref{eq:proj13} that $E^{r-1}F^{r-1}\w_{-j-r,s}^L$ is given by
\[ E^{r-1}F^{r-1}\w_{-j-r,s}^L=(-1)^j[j]!^2[i]!^2\w_{i,s}^S + \sum\limits_{t=1}^s \lambda_t \w_{i,s-t}^S\]
for some scalars $\lambda_1,...,\lambda_s$ so we will have $E\w_{i,s}^S=0$ if $E\w_{i,k}^S=0$ for all $0 \leq k <s$. We therefore only need to prove the base case $E\w_{i,0}^S=0$ and we will have $E\w_{i,s}^S=0$ for $s=0,...,m$ by induction. Notice however that $\w_{i,0}^T$ is a dominant weight vector of weight $i$, so the submodule it generates satisfies the relations given in \cite[Proposition 6.2]{CGP}. In particular, we have $E\w_{i,0}^S=0$, as desired. We will neglect the proof of Equation \eqref{eq:proj14} as it is similar to the proof for Equation \eqref{eq:proj13}. One simply obtains an equation analogous to Equation \eqref{eq:projcomp} derived from the relation
\[ FE^k=E^kF-\sum\limits_{t=0}^{k-1}E^{k-1-t}\frac{K-K^{-1}}{q-q^{-1}}E^t \]
and uses $E\w_{i,s}^S=0$ which was just shown. The only equation which remains to prove is Equation \eqref{eq:proj10} of which we have already shown $E\w_{i,s}^S=F\w_{-i,s}^S=0$. The argument for $E\w_{j+r,s}^R=0$ is similar to that for $F\w_{-i,s}^S=0$ and $F\w_{-j-r,s}^L=F^r\w_{i,s}^T=0$.\\

It only remains to show that
 \[\w_{i-2k,s}^T,\quad \w_{i-2k,s}^S, \quad \w_{j-r-2k',s}^L, \quad \w_{r-j+2k',s}^R\] with $k \in \{0,...,i\}$, $k' \in \{0,...,j\}$, and $s\in \{0,1,...,m\}$ form a basis of the submodule generated by $V$. It is clear from Equations \eqref{eq:proj1} - \eqref{eq:proj14} that the submodule is spanned by the vectors so we need only establish their linear independence. Suppose
\[\sum\limits_{\stackrel{k \in \{0,...,j\}}{ s \in \{0,...,m\}}}c^L_{k,s}\w_{j-r-2k,s}^L+\sum\limits_{\stackrel{k \in \{0,...,j\}}{ s \in \{0,...,m\}}}c^R_{k,s} \w_{r-j+2k,s}^R+\sum\limits_{\stackrel{k \in \{0,...,i\}}{ s \in \{0,...,m\}}}c^T_{k,s} \w_{i-2k,s}^T+\sum\limits_{\stackrel{k \in \{0,...,i\}}{ s \in \{0,...,m\}}}c^S_{k,s}\w_{i-2k,s}^S=0. \]
It can be seen by a standard argument that $c_{k,s}^L=c_{k,s}^S=0$ for all $k,s$ since they have distinct weights, so we can isolate the coeffecients through a suitable action of $H$. Consider now the remaining terms and act on both sides by $F^{i+1}$:
\[ 0=\sum\limits_{\stackrel{k \in \{0,...,i\}}{ s \in \{0,...,m\}}}c^T_{k,s} F^{i+1}\w_{i-2k,s}^T+\sum\limits_{\stackrel{k \in \{0,...,i\}}{ s \in \{0,...,m\}}}c^S_{k,s}F^{i+1}\w_{i-2k,s}^S =\sum\limits_{\stackrel{k \in \{0,...,i\}}{ s \in \{0,...,m\}}}c^T_{k,s} \w_{j-r-2k,s}^L\]
where we have used the fact that $F^{i+1}\w_{i-2k,s}^S=0$ and $F^{i+1}\w_{i-2k,s}^T=\w_{j-r-2k,s}^L$ for all $k,s$. We can again use a suitable action of $H$ to obtain $c^T_{k,s}=0$ for all $k,s$ and we obtain
\[ \sum\limits_{\stackrel{k \in \{0,...,i\}}{ s \in \{0,...,m\}}}c^S_{k,s}\w_{i-2k,s}^S=0\]
so $c^S_{k,s}=0$ follows by the same argument.
\end{proof}

We will denote by $\Proj{i}{m}$ the module generated by a single dominant vector $v=\w_{i,m}^T$ of generalized weight $i \in \{0,1,....,r-2\}$ and degree $m$. By Proposition \ref{Prop:projgen}, this module is indecomposable with dimension $2(m+1)r$ and has a vector space basis given by
 \[\w_{i-2k,s}^T,\quad \w_{i-2k,s}^S, \quad \w_{j-r-2k',s}^L, \quad \w_{r-j+2k',s}^R\] with $k \in \{0,...,i\}$, $k' \in \{0,...,j\}$, and $s\in \{0,1,...,m\}$, subject to Equations \eqref{eq:proj1} - \eqref{eq:proj14}. Verifying that Equations \eqref{eq:sl21} \& \eqref{eq:sl22} are satisfied is an easy but tedious computation. It was shown in \cite[Proposition 6.2]{CGP} that $\Proj{i}{0}$ is projective. Their proof relied on the semisimple action of $K$ and $H$, so we will instead use Proposition \ref{Prop:tiltingproj} to generalize this statement. 

\begin{theorem}\label{Thm:projcover}
The module $\Proj{i}{m} \otimes \mathbb{C}^H_{\frac{k\ell}{2}}$ is the projective cover of $L_i \otimes \mathbb{C}^H_{\frac{k\ell}{2}}$ in $\mcC_m$.
\end{theorem}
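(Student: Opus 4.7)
The plan is to apply Proposition \ref{Prop:tiltingproj}, which reduces projectivity of $\Proj{i}{m}$ in $\mcC_m$ to checking that it is tilting, i.e., admits both a standard and a costandard filtration of degree $m$. Once projectivity is established, the indecomposability of $\Proj{i}{m}$ (noted following Proposition \ref{Prop:projgen}) together with the evident essential surjection $\Proj{i}{m} \twoheadrightarrow L_i$ sending $\w_{i,m}^T$ to a highest weight generator of $L_i$ identifies $\Proj{i}{m}$ as the projective cover of $L_i$ in $\mcC_m$. Tensoring with the one-dimensional invertible module $\mathbb{C}^H_{\frac{k\ell}{2}}$ then preserves projectivity (Lemma \ref{Lem:tensorproj}) and essential surjections, giving the theorem.

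For the standard filtration, I would take $N \subset \Proj{i}{m}$ to be the subspace spanned by $\{\w_{r-j+2k,s}^R,\, \w_{i-2k,s}^S\}$. Equations \eqref{eq:proj7}--\eqref{eq:proj14} show that $E$, $F$, $H$, $K$ stabilize $N$, making it a submodule of dimension $r(m+1)$. The vector $\w_{r+j,m}^R$ is a generalized highest weight vector of weight $r+j = 2r-2-i$ and degree $m$ by Equations \eqref{eq:proj5} and \eqref{eq:proj10}, so the universal property of generalized Verma modules produces a surjection $\GVerma{r+j}{m} \twoheadrightarrow N$, which is an isomorphism by equality of dimensions. The image of $\w_{i,m}^T$ in $\Proj{i}{m}/N$ is likewise a generalized highest weight vector of weight $i$ and degree $m$ (since $E\w_{i,m}^T = \w_{r-j,m}^R \in N$ while $(H-i)^m \w_{i,m}^T = \w_{i,0}^T$ is non-zero modulo $N$), yielding an isomorphism $\Proj{i}{m}/N \cong \GVerma{i}{m}$ and hence a standard filtration $0 \subset N \subset \Proj{i}{m}$ of degree $m$.

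For the costandard filtration, apply the duality functor to this short exact sequence to obtain a costandard filtration of $\check{\Proj{i}{m}}$ with successive quotients $\DGVerma{i}{m}$ and $\DGVerma{r+j}{m}$. It then suffices to prove $\check{\Proj{i}{m}} \cong \Proj{i}{m}$, which is the main technical obstacle of the proof. By Proposition \ref{Prop:projgen}, any dominant vector of weight $i$ and degree $m$ in $\check{\Proj{i}{m}}$ generates a submodule of dimension $2(m+1)r = \dim \check{\Proj{i}{m}}$, so producing such a vector forces the desired isomorphism. Letting $\phi_{\lambda,s}^X$ denote the basis of $\check{\Proj{i}{m}}$ dual to the basis $\w_{\lambda,s}^X$ of Proposition \ref{Prop:projgen}, a direct computation with the twisted dual action $(x \cdot \phi)(w) = \phi(S(\omega(x))w)$ shows $(H-i) \cdot \phi^X_{i,s} = \phi^X_{i,s+1}$, so each $\phi^X_{i,0}$ (for $X \in \{T,S\}$) is a weight-$i$ vector of degree exactly $m$. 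The remaining task is to choose coefficients $c_T, c_S \in \mathbb{C}$ so that $u := c_T \phi^T_{i,0} + c_S \phi^S_{i,0}$ satisfies $(FE)^2 u = 0$; this is a concrete linear algebra problem whose solvability follows by dualizing the defining relations $(FE)^2 \w_{i,s}^T = 0$ of $\Proj{i}{m}$, and with such $u$ in hand Proposition \ref{Prop:projgen} furnishes the self-duality and completes the argument.
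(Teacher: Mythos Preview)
Your standard-filtration argument is correct and coincides with the paper's. The divergence is in how the costandard filtration is obtained: the paper builds it directly on $\Proj{i}{m}$ by identifying the submodule $\langle\w^L_{-j-r,m}\rangle$, showing that its $\check{(\,\cdot\,)}$-dual is cyclic on the genuine generalized highest-weight vector $\delta_{i,0}:=(\w^S_{i,0})^{*}$, and concluding $\langle\w^L_{-j-r,m}\rangle\cong\DGVerma{i}{m}$ and $\Proj{i}{m}/\langle\w^L_{-j-r,m}\rangle\cong\DGVerma{r+j}{m}$. You instead try to prove full self-duality $\check{\Proj{i}{m}}\cong\Proj{i}{m}$ and transport the standard filtration across.

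Your self-duality step has a genuine gap. You invoke Proposition~\ref{Prop:projgen} to conclude that \emph{any} dominant vector of weight $i$ and degree $m$ in $\check{\Proj{i}{m}}$ generates a submodule of dimension $2(m+1)r$. This is false already inside $\check{\Proj{i}{m}}$: take $u=\phi^T_{i,0}$. Using $F\w^R_{r-j,s}=\w^S_{i,s}$ (Equation~\eqref{eq:proj8}) one checks that $(E\cdot\phi^T_{i,0})(w)=\phi^T_{i,0}(-KFw)$ vanishes on every basis vector $w$, so $E\cdot\phi^T_{i,0}=0$; thus $\phi^T_{i,0}$ is a highest-weight (hence dominant) vector of weight $i$ and degree exactly $m$, yet it generates only a quotient of $\GVerma{i}{m}$ of dimension at most $(m+1)r$. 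Proposition~\ref{Prop:projgen} should be read as describing the \emph{universal} module on a dominant generator---this is how the paper uses it to define $\Proj{i}{m}$---not as a dimension bound valid in an arbitrary ambient module. To repair your route you must choose $u$ more carefully (for instance $u=\phi^S_{i,0}$ does satisfy $Eu\neq0$) and then verify by hand that this particular $u$ generates all of $\check{\Proj{i}{m}}$. That verification is a computation of the same shape and length as the paper's direct construction of the costandard filtration, so the self-duality detour does not end up saving work.
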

\begin{proof}
We begin by showing that $\Proj{i}{m}$ is projective. First, observe from Equation \eqref{eq:proj10} that the vector $\w_{j+r,m}^R$ is highest weight with weight $j+r$ and degree $m$. Therefore, there exists a surjection $\varphi:\GVerma{j+r}{m} \twoheadrightarrow \langle \w_{j+r,m}^R\rangle$ onto the submodule $\langle \w_{j+r,m}^R \rangle$ of $\Proj{i}{m}$ generated by $\w_{j+r,m}^R$. It follows from Proposition \ref{Prop:projgen} that this submodule has a basis given by 
\[ \w_{i-2k,s}^S,  \quad \w_{r-j+2k',s}^R\] 
with $k \in \{0,...,i\}$, $k' \in \{0,...,j\}$, and $s\in \{0,1,...,m\}$. Hence, $\dim \GVerma{j+r}{m}=\dim \langle \w_{j+r,m}^R \rangle=(m+1)r$ so $\varphi$ is in fact an isomorphism. Now $\Proj{i}{m}/\langle \w_{j+r,m}^R \rangle$ is a highest weight module generated by the image of $\w_{i,m}^T$ and it is clear again from Proposition \ref{Prop:projgen} that this module has a basis given by the images of
\[ \w_{i-2k,s}^T,  \quad \w_{j-r-2k',s}^L\]
so $\dim \Proj{i}{m}/\langle \w_{j+r,m}^R \rangle= \dim \GVerma{i}{m}=(m+1)r$. That is, $\Proj{i}{m}/\langle \w_{j+r,m}^R \rangle \cong \GVerma{i}{m}$ so $\Proj{i}{m}$ admits a degree m standard filtration of length 2. 

Consider now the submodule $\langle \w_{-j-r,m}^L \rangle$ of $\Proj{i}{m}$. This is a lowest weight module generated by the lowest weight vector $\w_{-k-r,m}^L$ of degree m and contains the highest weight vector $\w_{i,m}^S$ of weight $i$ and degree $m$. We claim now that the dual $\check{\langle \w_{-j-r,m}^L \rangle}$ is a highest weight module. Indeed, this module has vector space basis given by the dual basis $\{\delta_{i-2k,s}, \delta_{j-r-2k',s}\},$ where $k \in \{0,...,i\}, k'\in \{0,...,j\}$ and $s \in \{0,...,m\}$. Here we are using the notation $\delta_{i-2k,s}:=(\w_{i-2k,s}^S)^*$ for $k \in \{0,...,i\}$ and $\delta_{j-r-2k,s}:=(\w_{j-r-2k,s}^L)^*$ for $k \in \{0,...,j\}$. The $\overline{U}_q^H(\mfg)$-action satisfies (recall Definition \ref{Def:dual}):
\begin{align*}
(H \cdot \delta_{t,s})(x)&=\delta_{t,s}( H \cdot x), & (K \cdot \delta_{t,s})(x)&=\delta_{t,s}(K \cdot x),\\
(E \cdot \delta_{t,s})(x)&= \delta_{t,s}(-KF \cdot x), & (F \cdot \delta_{t,s})(x)&=\delta_{t,s}(-EK^{-1} \cdot x).\\
\end{align*}
It follows easily from these relations that
\[ E \cdot \delta_{i,s}=0, \quad (H-t) \cdot \delta_{t,s}=\delta_{t,s+1}, \quad F \cdot \delta_{t,s}=\delta_{t-2,s} \]
In particular, $\delta_{i,0}$ is a generalized highest weight vector with weight $i$ and degree $m$. Further, it follows from the structure of $\langle \w_{-j-r,m}^L \rangle$ that for all basis vectors $\w_{t,s} \in \langle \w_{-j-r,m}^L \rangle$, there exists some $X \in \overline{U}_q^H(\mfg)$ such that $X \w_{t,s}=\w_{i,0}^S$. Let $X' \in \overline{U}_q^H(\mfg)$ be such that $X=S(\omega(X'))$. Then,
\[ (X' \cdot \delta_{i,0})( x)=\delta_{i,0}( X \cdot x)\]
so $(X' \cdot \delta_{i,0})=\delta_{t,s}$. That is, $\delta_{i,0}$ generates $\check{\langle  \w_{j-r-2k,m}^L \rangle}$ so it is a generalized highest weight module with highest weight $i$ and degree $m$. We therefore have
\[  \check{\langle  \w_{-j-r,m}^L \rangle} \cong \GVerma{i}{m}\]
as they have the same dimension, so $ \langle  \w_{j-r-2k,m}^L \rangle \cong \check{M}_i^m$. One can similarly show that $\Proj{i}{m}/\langle \w_{-j-r,m}^L \rangle \cong \check{M}_{j+r}^m$, that is, $\Proj{i}{m}$ admits a degree $m$ costandard filtration so it is in fact a tilting module in $\mcC_m$ and therefore projective by Proposition \ref{Prop:tiltingproj}. It is easy to see that $L_i$ is the unique simple quotient of $P_i$ so $L_i \otimes \mathbb{C}_{\frac{k\ell}{2}}$ is the unique simple quotient of $P_i \otimes \mathbb{C}_{\frac{k \ell}{2}}$ as $\mbbC_{\frac{k \ell}{2}}$ is invertible and tensoring by an invertible object gives an exact functor. Therefore, $\Proj{i}{m} \otimes \mathbb{C}_{\frac{k\ell}{2}}$ is the projective cover of $L_i \otimes \mathbb{C}_{\frac{k \ell}{2}}$ in $\mcC_m$ by \cite[Lemma 3.6]{Kr}.


\end{proof}

\end{document}